\theoremstyle{thmstyleone}%
\newtheorem{theorem}{Theorem}[section]
\newtheorem{lem}[theorem]{Lemma}
\newtheorem{prop}[theorem]{Proposition}
\theoremstyle{thmstylethree}%
\newtheorem{definition}[theorem]{Definition}
\theoremstyle{thmstyletwo}%
\newtheorem{obs}[theorem]{Remark}
\numberwithin{equation}{section}
\numberwithin{figure}{section}
\begin{document}

\title[A stochastic model for immune response]{A stochastic model for immune response with mutations and evolution: the non-spatial setting}

\author[1]{\fnm{Carolina} \sur{Grejo}}
\author[2]{\fnm{Fabio} \sur{Lopes}}
\author[3]{\fnm{F\'abio} \sur{Machado}}
\author[4]{\fnm{Alejandro} \sur{Rold\'an-Correa}}

\affil[1]{\orgname{Faculdade SESI-SP de Educação}, \orgaddress{\city{Campinas}, \country{Brazil}}}
\affil[2]{\orgdiv{Departamento de Matem\'atica}, \orgname{Universidad Tecnol\'ogica Metropolitana}, \orgaddress{\country{Chile}}}
\affil[3]{\orgname{Institute of Mathematics and Statistics, Universidade de S\~ao Paulo}, \orgaddress{\city{S\~ao Paulo}, \country{Brazil}}}
\affil[4]{\orgname{Instituto de Matem\'aticas, Universidad de Antioquia}, \orgaddress{\city{Medell\'in}, \country{Colombia}}}

\abstract{We consider a stochastic model for a pathogen population in the presence of an immune response, in which pathogen types are partially ordered by ancestry and the immune system must eliminate ancestor types before it can eliminate their descendants. In this model, pathogens reproduce independently at rate $\lambda>0$ and, at each birth, a mutation occurs with probability $r\in(0,1]$, producing a novel type that is antigenically distinct and whose elimination by the immune system is delayed relative to its ancestors. We provide an explicit characterization of the survival--extinction phase transition and compute the expected total progeny in the subcritical regime.
We then extend the model by allowing mutations to be deleterious: conditional on mutation, with probability $p\in(0,1]$ the mutation is beneficial and with probability $1-p$ it is deleterious, producing a sterile offspring. For this extension, we obtain an explicit survival criterion in terms of $(\lambda,r,p)$ and identify parameter regimes in which survival is possible only for an intermediate range of mutation probabilities, reflecting the balance between immune escape and mutational load.}

\keywords{Branching processes, Evolutionary models, Population dynamics}

\pacs[MSC Classification]{60J80, 60J85, 92D15, 92D25}

\maketitle

	\section{Introduction}

Many pathogens evade host immunity by generating a continuing stream of antigenic variants through mutation and selection. 
This ``Red Queen'' coevolutionary pressure is prominent for RNA viruses (e.g.\ influenza and SARS-CoV-2) and for parasites exhibiting antigenic variation, and it motivates mathematical models across scales ranging from within-host dynamics to population-level epidemic evolution \cite{Cox,Garet,KucharskiAndreasenGog2016,MoraWalczak2023}. 
A central theme is a trade-off: higher mutation rates can create immune-escape variants, but they also incur a mutational load because most mutations are deleterious.

At the within-host scale, many classical models describe virus--immune interactions through predator--prey-type ordinary (or partial) differential equations, where immune activity is stimulated by pathogen abundance and in turn reduces pathogen growth; see, e.g., \cite{DeBoerPerelson1998,IwasaMichorNowak2004,NM2000,Sasaki1994}. 
In this setting, Sasaki \cite{Sasaki1994} studied antigenic drift/switching models in which only a small fraction of mutations produces successful immune-escape variants, and showed that this trade-off can select an \emph{intermediate} mutation rate that maximizes pathogen success (stationary pathogen density) under immune pressure. 
Related work by Sasaki and collaborators on multi-strain influenza dynamics with immune cross-reaction can be found in \cite{OmoriAdamsSasaki2010}, and recent evolutionary analyses at the epidemiological scale are in \cite{SasakiLionBoots2022}. 
From a complementary viewpoint, Eigen's quasispecies theory \cite{Eigen1971} and subsequent work on ``error thresholds'' and lethal mutagenesis \cite{BullSanjuaanWilke2007,SteinmeyerWilke2009} highlight that sufficiently high mutation rates can drive population collapse. 
More recently, traveling-wave/fitness-wave approaches have provided quantitative descriptions of antigenic advance, persistence, and extinction in multi-strain settings \cite{YanNeherShraiman2019,MarchiLassigMoraWalczak2021} and have been used to study the evolutionary stability of mutation rates under immune escape \cite{ChardesMazzoliniMoraWalczak2023}; see also \cite{MoraWalczak2023}.

In a different but complementary direction, Schinazi and Schweinsberg \cite{schinazi} introduced simple stochastic models for a pathogen population under an immune response, based on \emph{type-level} (global) killing rules. 
In their non-spatial Model~2, pathogens reproduce independently at rate $\lambda>0$ and, at each birth, a mutation occurs with probability $r\in(0,1]$, producing a new type that has never appeared before. 
Each type is removed after an independent exponential time of mean one, at which time \emph{all} individuals of that type are eliminated simultaneously. They show that pathogens can survive with positive probability if and only if $\lambda>1$. Hence, whether or not pathogens can survive depends only on the reproduction rate and not on the mutation rate. 
They also defined spatial analogues on $\mathbb{Z}^d$ \cite{schinazi}; in particular, Model~S2 exhibits a non-trivial dependence on $r$ in which, for $\lambda$ sufficiently large, survival with positive probability occurs for large mutation probabilities but extinction holds for small mutation probabilities. 
Liggett, Schinazi and Schweinsberg \cite{LSS2008} proved that Model~S2 on homogeneous trees has a behavior that  captures both features
from the model on $\mathbb{Z}^d$ (for an intermediate range of $\lambda$) and from the non-spatial Model~2 (for sufficiently large $\lambda$).

Schinazi and Schweinsberg \cite{schinazi} motivate global killing rules as a mathematical proxy for a ``central-command'' immune response acting in a coordinated way at the host level. 
In the HIV literature, for instance, Silvestri and Feinberg \cite{SilvestriFeinberg2003} argue that the pathogenesis of AIDS is driven in large part by chronic immune activation rather than by direct virus-induced cytopathicity, and subsequent work has further documented the systemic nature and consequences of immune activation and inflammation in chronic infection \cite{Brenchley2006,SodoraSilvestri2008,DeeksTracyDouek2013}. 
Our models adopt a similarly stylized perspective: we do not attempt to describe explicitly the dynamics of immune cells and their interactions with pathogens, but rather encode immune recognition and clearance at the level of pathogen types in a way that permits rigorous analysis.

The main modeling novelty of the present paper is to modify Model~2 by imposing an \emph{ancestral-order constraint}: pathogen types are partially ordered by ancestry and immune elimination respects this order, in the sense that the immune system must eliminate an ancestor type before it can eliminate its descendants. We call this model the \textit{beneficial-mutation model}. Here, the term ``beneficial-mutation'' refers to antigenic novelty (immune escape) rather than to an increased replication rate, since as in the models of Schinazi and Schweinsberg we assume that all types share the same intrinsic birth rate $\lambda$. Interestingly, the phase trasition diagram for survival obtained in the present paper (Theorem~\ref{TNS}) for the  beneficial-mutation model is qualitatively similar to the one in \cite{LSS2008} for homogeneous trees.
One possible interpretation of the ancestral-order constraint is immunodominance/resource allocation:
immune responses against concurrent antigenic variants can form strong hierarchies because responding
lymphocyte clones compete for limiting stimulatory resources. Such competition can suppress or delay responses to weaker or newly arising
variants until dominant responses contract or antigen availability is redistributed
\cite{Borghans1999,Kedl2000}. Moreover, in settings with antigenic variation, immune escape can shift the
immunodominant response over time \cite{NowakMay1995}, consistent with our interpretation that while an
abundant ancestral type persists, immune pressure is effectively focused on it, and sufficiently specific
control of newly arisen descendants builds up only after clearance of the ancestor.

Motivated by Sasaki's framework \cite{Sasaki1994} and by the error-threshold/lethal-mutagenesis literature \cite{Eigen1971,BullSanjuaanWilke2007,SteinmeyerWilke2009}, we then extend the model by allowing mutations to be either \emph{beneficial} or \emph{deleterious}. 
Conditional on mutation, with probability $p\in(0,1]$ the newborn is a new beneficial type, while with probability $1-p$ it is deleterious and produces a sterile offspring. 
For this mixed model we obtain a closed-form survival criterion in terms of $(\lambda,r,p)$ and identify parameter regimes in which survival is possible only for an \emph{intermediate} range of mutation probabilities $r$ (neither too low nor too high), reflecting the balance between the supply of immune-escape variants and mutational load; see Figure~\ref{fig:curvascriticas}. 
In this sense, our mixed model can be viewed as a rigorously analyzable minimalist stochastic framework that reproduces, via an alternative immune mechanism acting at the type level, qualitative behaviors previously reported by Sasaki \cite{Sasaki1994} and by other studies of within-host virus–immune dynamics modeled through complex predator--prey-type systems of ordinary (or partial) differential equations.

The models we introduce in this work are also naturally connected to stochastic processes with \emph{mass removal} or \emph{catastrophes}, such as forest-fire and mass-extinction dynamics \cite{JMR2016,L2011,MRS2015,schinazi4}, and to recent spatial models incorporating sterility effects \cite{Velasco}. 

\medskip
\noindent\textbf{Main results.}
For the beneficial-mutation model, we identify an explicit survival threshold (Theorem~\ref{TNS})
and compute the expected total progeny in the subcritical regime (Proposition~\ref{moments}). The proofs of these results combine
branching-process arguments adapted to the dependence induced by the immune ``type-killing'' rule with the analysis of a
recursive distributional equation for the total progeny of the process, leading to explicit fixed-point and
integral equations and, ultimately, closed-form critical curves. For the mixed model allowing deleterious mutations, we obtain a closed-form survival criterion (Theorem~\ref{TNSrp}),
derive subcritical moment formulas for the total number of pathogens ever born (Proposition~\ref{prop:moments-B-lrp}),
and describe the resulting phase diagram in $(\lambda,r,p)$ (Remark~\ref{obs:phase-diagram}). These results are obtained
via a thinning construction that relates the mixed model to an appropriate version of beneficial-mutation model with
effective parameters, allowing the derivation of the survival phase diagram and the moments of the mixed model with minimal additional work.

The paper focuses on the non-spatial setting. 
A companion paper treating the spatial model on $\mathbb{Z}^d$ is in preparation \cite{GLMRspatial}. 
A related spatial variant of the beneficial-mutation model has also been studied on homogeneous trees by Lopes and Rold\'an-Correa~\cite{Lopes_2025}.

\medskip
\noindent\textbf{Organization of the paper.} 
Section~2 introduces the  model with only beneficial mutations and presents the results on phase transition and moment formulas for the total progeny. 
Section~3 presents the extension adding deleterious mutations and describes the resulting phase diagram. 
Section~4 contains the proofs of the main results.

\section{Beneficial-mutation model }
	
	Next, we describe the dynamics of the beneficial-mutation model. At time zero, a single pathogen of type $1$ enters a host for the first time. The class of pathogens of type $1$ receives immediately a killing time given by an exponential random variable  of rate 1. During the killing time of type, each pathogen gives birth independently at rate $\lambda$. When a new pathogen is born, with probability $1-r$, it has the same type of its parent, and with probability $r$, a mutation occurs, and the new born pathogen has a new type that has never appeared previously in the system. Furthermore, we impose an \emph{ancestral-order} killing rule: a mutant type cannot be eliminated before its ancestor type has been eliminated. This can be interpreted as antigenic novelty (immune escape) along a lineage; note that all types have the same intrinsic birth rate $\lambda$, and the advantage of a beneficial mutation is solely in the delayed immune clearance. For this, when each type first appears in the system, it receives an independent random `clock'  with exponential distribution of rate 1. However, the `clock' of each type only starts ticking after its ancestor type has been eliminated. When the `clock' of a type rings, all pathogens of that type are eliminated simultaneously by the immune system.  Note that this is equivalent to say that when a pathogen is born it receives an independent `clock' distributed as a mixed random variable  which is 0 with probability $1-r$, and an exponential of rate 1, with probability $r$. For $r\in(0,1]$ and
    $\lambda > 0$, let's denote this process by $\mathcal{B}(\lambda, r)$.
	\begin{figure}[h!]
    	\centering	
		\includegraphics[scale=0.60]{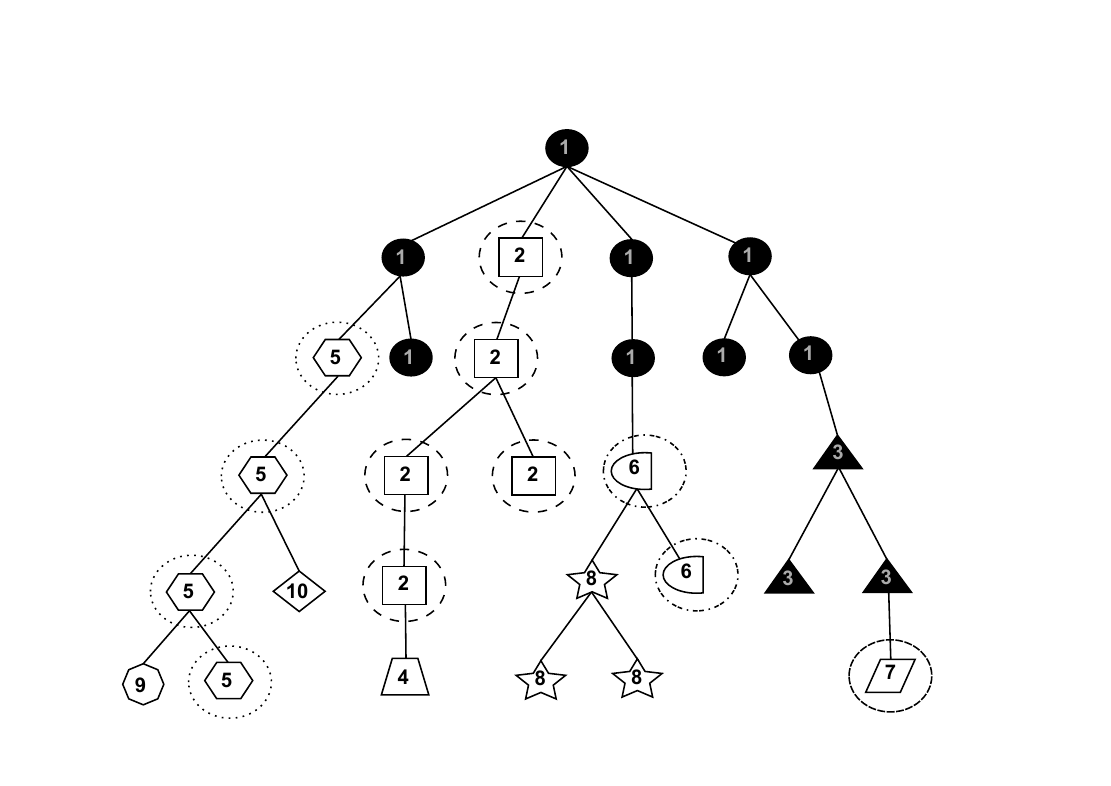}
		\caption{Illustration of the non-spatial process $\mathcal{B}(\lambda, r)$ with $0<r<1$. Pathogens of the same type share the same number and shape. They are numbered according to the order their type first entered into the system.   Pathogens alive are in white and
			pathogens already dead are in black. Pathogen types at risk are encircled. Pathogens of the same type die all together. For instance, if the next event to occur is the death of all pathogens of type 5. All pathogens of type 5 would change color, and pathogens of types 9 and 10 would become encircled.} \label{figuraarvore}
	\end{figure}

	The `clocks' of the pathogen types can be thought as the incremental time (or the killing time) that the immune system needs to recognize a new pathogen type after it has already managed to eliminate its ancestor type. Once a type is recognized the immune system is able to eliminate all pathogens of that type very effectively.

	\begin{definition} 
		
		We say that the process $\mathcal{B}(\lambda, r)$ survives if, with positive probability, there are pathogens alive for all $t>0$; otherwise, it dies out.
	\end{definition}
	Our main result concerning this model is the following. 
	
	\begin{theorem}\label{TNS} The process $\mathcal{B}(\lambda, r)$ dies out if and only if $$\lambda\leq(1+\sqrt{r})^{-2}.$$ 
	\end{theorem}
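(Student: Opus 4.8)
\section*{Proof proposal}

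The plan is to reduce $\mathcal B(\lambda,r)$ to a branching structure indexed by \emph{types} and to exploit the fact that, under the ancestral-order rule, the killing time of a type equals the sum of the independent rate-$1$ exponential clocks along the path from the root to that type in the tree of types. A same-type birth produces an offspring with the same killing time as its parent, whereas a mutation founds a new type whose killing time is increased by an independent $\mathrm{Exp}(1)$ amount; the natural state variable attached to a pathogen is therefore its \emph{remaining lifetime} $\ell$ (the time until its type is eliminated), which is inherited unchanged by same-type offspring and increased by an $\mathrm{Exp}(1)$ amount at each mutation. Since the within-type population is a Yule process, which does not explode in finite time, each type emits only finitely many mutant children before it is killed, so the tree of types is locally finite. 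By K\"onig's lemma the process then survives if and only if this type tree is infinite; and along any infinite lineage the killing times diverge to $+\infty$, so pathogens are present at all times. Thus survival is equivalent to non-extinction of the type tree.

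For the extinction half ($\lambda\le(1+\sqrt r)^{-2}$) I would compute the expected total progeny, as in Proposition~\ref{moments}. Writing $m(\ell)$ for the expected number of pathogens ever born from a single founder with remaining lifetime $\ell$, a first-step decomposition over the founder's reproduction events yields $m'(\ell)=\lambda\bigl[1+(1-r)m(\ell)+r\int_0^\infty e^{-e}m(\ell+e)\,de\bigr]$, which, after introducing $K(\ell)=\int_0^\infty e^{-e}m(\ell+e)\,de$ (so that $K'=K-m$), becomes a $2\times2$ linear system with characteristic equation $\mu^2-(1+\lambda(1-r))\mu+\lambda=0$. The total number $N$ of pathogens ever born satisfies $\bbE[N]=1+\int_0^\infty e^{-\ell}m(\ell)\,d\ell$, which is finite exactly when both roots are real and strictly below $1$; since the quadratic takes the value $\lambda r>0$ at $\mu=1$, this holds precisely when the discriminant $(1+\lambda(1-r))^2-4\lambda$ is nonnegative, i.e. when $\lambda\le(1+\sqrt r)^{-2}$ (the relevant root of $(1+\lambda(1-r))^2=4\lambda$, the other being $(1-\sqrt r)^{-2}$). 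A finite expected total progeny forces $N<\infty$ almost surely, hence extinction.

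For the survival half ($\lambda>(1+\sqrt r)^{-2}$) I would pass to the branching random walk whose $n$-th generation consists of the type-founders reached after $n$ mutations, each carrying as its position its remaining lifetime at founding, with absorption when a position hits $0$. The many-to-one formula gives the log-Laplace functional $\psi(\sigma)=-\sigma+\lambda(1-r)+\lambda r/(1-\sigma)$ on $\sigma\in(0,1)$, and the process survives iff the rightmost-particle speed $v^\ast=\inf_{0<\sigma<1}\psi(\sigma)/\sigma$ is strictly positive. The infimum is attained at an interior $\sigma^\ast$, and the threshold $v^\ast=0$ is governed by the tangency conditions $\psi(\sigma^\ast)=\psi'(\sigma^\ast)=0$, namely $(1-\sigma^\ast)^2=\lambda r$ and $\lambda(1-r)+2\sqrt{\lambda r}=1$; the latter is again equivalent to $\lambda=(1+\sqrt r)^{-2}$, so $v^\ast>0$ precisely for $\lambda>(1+\sqrt r)^{-2}$. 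Equivalently, one may exhibit a nonconstant solution $q(\ell)<1$ of the fixed-point equation $q(\ell)=\exp\!\bigl(-\lambda\int_0^\ell[\,1-(1-r)q(u)-r\bar q(u)\,]\,du\bigr)$, with $\bar q(\ell)=\int_0^\infty e^{-e}q(\ell+e)\,de$, obtained from the Poisson generating functional of the mutant offspring; such a solution is the extinction probability and drops below $1$ exactly past the threshold.

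The main obstacle is the survival direction. Unlike a Galton--Watson process, the model has strong positive dependence: an entire type-block is eliminated simultaneously when its clock rings, so the sub-populations descending from same-type siblings are not independent, and a first-moment computation (which already gives $\bbE[N]=\infty$ for $\lambda>(1+\sqrt r)^{-2}$) does not by itself yield a positive survival probability. Moreover, the sharp constant hinges on the residual-lifetime \emph{head start} that each new type inherits from its parent: discarding it and counting only mutations emitted after a type's clock starts produces an embedded Galton--Watson process with mean $\lambda r/(1-\lambda(1-r))$, whose supercriticality threshold is the strictly larger value $\lambda>1$. Making the rightmost-particle argument rigorous in the presence of the block-killing dependence --- via a truncated second-moment/Paley--Zygmund estimate on the number of founders with remaining lifetime above a growing barrier, or via a careful analysis of the minimal solution of the fixed-point equation --- is therefore the crux of the proof.
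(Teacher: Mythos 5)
Your reduction to the type tree, your threshold algebra, and your extinction half are all sound, and the extinction half is essentially the paper's own argument: your $m(\ell)$ is (up to an additive constant) the paper's $f_1(t)=\mathbb{E}[Y(t)]$, and your characteristic equation $\mu^2-(1+\lambda(1-r))\mu+\lambda=0$ is exactly the quadratic behind the paper's ODE \eqref{B}, with the same two discriminant roots $(1\pm\sqrt r)^{-2}$. One caveat: the step you state as ``$\mathbb{E}[N]$ is finite exactly when both roots are real and strictly below $1$'' is not automatic, because the ODE analysis presupposes finiteness of the expectation. The ``infinite when the roots are complex'' direction is the positivity/oscillation argument (Proposition~\ref{moments}$(ii)$), but finiteness for $\lambda\le(1+\sqrt r)^{-2}$ requires a separate argument; the paper does this by truncating ($f_1^{(n)}(t)=\mathbb{E}[\min\{Y(t),n\}]$) and running a fixed-point/comparison lemma (Lemmas~\ref{key} and~\ref{lembor}, following Bordenave), which simultaneously identifies the correct solution $f_1(t)=e^{\alpha t}$ within the two-parameter solution family. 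Your sketch would need the same step.

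The survival half is where you genuinely diverge, and it is also where your proposal stops short. Your criterion is the right one --- $\psi(\sigma)/\sigma>0$ is algebraically identical to the paper's condition $\min_{u>0}\{\tfrac{\lambda}{u}\phi(u)\}>1$ with $\phi(u)=1-r+\tfrac{r}{1-u}$ (Lemma~\ref{lemaald}) --- but the conversion of first-moment growth into positive survival probability, which you correctly call the crux, is exactly what you leave unexecuted, and the machinery you propose (truncated second moment/Paley--Zygmund for a killed BRW) is heavier than what is needed. Two points. First, your diagnosis of the obstruction is slightly off: if you index the process by types, as your own BRW does, then the subtrees of distinct types are conditionally independent given their founding states --- the block-killing dependence lives entirely \emph{within} families --- so no second-moment argument is forced. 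Second, your own remark about head starts contains the seed of the paper's fix: discarding the inherited head start at \emph{every} mutation gives the embedded Galton--Watson mean $\lambda r/(1-\lambda(1-r))$ and the wrong threshold $\lambda>1$, but discarding it only once every $k$ generations loses an asymptotically negligible factor. Concretely, the paper works in $\mathcal{B}^*(\lambda,r)$ (root also gets a mixed clock, so lineage clocks are i.i.d.\ and the Aldous--Krebs random-walk estimate applies), and calls a pathogen \emph{special} if it is the root, or a mutant in generation $nk$ descended from a special particle of generation $(n-1)k$ and born after that special ancestor's own ancestors have died. Memorylessness of the exponential clocks makes the offspring counts of distinct special particles i.i.d., with mean $r\sum_{k(\mathbf{n})=k}\mathbb{P}(\mathbf{n}\text{ is born})>1$ for $k$ large by Lemma~\ref{lemaald}; this embedded supercritical Galton--Watson process yields survival with no second moments at all. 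So your plan is not wrong, but at its hardest point it names candidate techniques rather than an argument, and the elementary renewal/restart device that actually closes the gap is the idea missing from your proposal.
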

	By coupling arguments one can see that the survival probability of $\mathcal{B}(\lambda, r)$ is a non-decreasing function of $\lambda$ and also of $r$.
	From Theorem \ref{TNS}, for fixed $r\in (0,1]$, we obtain, 
	$\lambda_c(r):=(1+\sqrt{r})^{-2}$, the critical parameter (in $\lambda$) for the survival of $\mathcal{B}(\lambda, r)$. Note that if $\lambda\leq1/4$, then $\mathcal{B}(\lambda, r)$ dies out for all $r\in(0,1]$.  If $\lambda\geq1$, then $\mathcal{B}(\lambda, r)$ survives for all $r\in(0,1]$. However, if $1/4<\lambda<1$, then $\mathcal{B}(\lambda, r)$ dies out for $r\leq (1 - \sqrt{\lambda})^2/\lambda$, and survives for $r>(1 - \sqrt{\lambda})^2/\lambda$.   So, for $1/4<\lambda<1$, the process $\mathcal{B}(\lambda, r)$ presents a phase transition in $r$. See Figure~\ref{figura2}. This behavior is strikingly different from the Model 2 in \cite{schinazi}. In fact, this behavior is similar to the spatial version of model S2 on regular trees studied in Liggett \textit{et al.}~\cite{LSS2008}.
	
	Let $N$ denote the total progeny of the pathogen population in $\mathcal{B}(\lambda, r)$ i.e. the total number of pathogens entering the system including the initial pathogen. The sufficient condition in Theorem \ref{TNS} is a consequence of the next proposition.
	\begin{prop}
		\label{moments}
		For any fixed $r\in (0,1]$ and $\lambda>0$.
		\begin{itemize}
			\item [$(i)$] For $0<\lambda\leq (1+\sqrt{r})^{-2},$ 
			$$ \mathbb{E}[N] =\frac{2}{1-\lambda(1-r)+\sqrt{[1+\lambda (1-r)]^2 - 4\lambda}}.  $$
			In particular, at $\lambda=(1+\sqrt{r})^{-2}$, $ \mathbb{E}[N]= \frac{1+\sqrt{r}}{\sqrt{r}}$.\\
			\item[$(ii)$] For $\lambda>(1+\sqrt{r})^{-2},$  $\mathbb{E}[N]=\infty$.
		\end{itemize}
	\end{prop}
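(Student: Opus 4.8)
The plan is to exploit the \emph{type-block} structure of $\mathcal{B}(\lambda,r)$ together with a renewal-type equation for the expected total progeny. First I would organise the population by type. Within a single type the same-type births (clock $0$) grow the type's population while the new-type births (clock $\mathrm{Exp}(1)$) spawn fresh blocks; since each pathogen reproduces at rate $\lambda$ and a birth is same-type with probability $1-r$, the individuals of a fixed type form a Yule process of rate $\beta:=\lambda(1-r)$ that lives for the \emph{active window} $[0,\ell]$ of that type (the time from the founder's birth to the type's elimination), so a block with window $\ell$ has expected size $e^{\beta\ell}$. New types are emitted along this Yule process at rate $\lambda r$ per individual, i.e.\ according to a Cox process directed by $Y$. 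Crucially, a mutant emitted at time $t$ founds a block whose own window equals $(\ell-t)+E$, where $E\sim\mathrm{Exp}(1)$ is its independent clock and $\ell-t$ is the residual window of the parent type; this is exactly where the ancestral-order rule enters and is the source of the dependence on $r$.

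Writing $V(\ell)$ for the expected total progeny of the sub-family founded by a block with window $\ell$, linearity of expectation (Campbell's formula for the Cox process) yields
\[
V(\ell)=e^{\beta\ell}+\lambda r\int_0^\ell e^{\beta t}\,\mathbb{E}\big[V\big((\ell-t)+E\big)\big]\,dt ,\qquad E\sim\mathrm{Exp}(1),
\]
and, since the root's window is $C_1\sim\mathrm{Exp}(1)$, one has $\mathbb{E}[N]=\mathbb{E}[V(C_1)]=\int_0^\infty V(\ell)e^{-\ell}\,d\ell$. The key observation is that $V(\ell)=e^{\theta\ell}$ solves the equation precisely when $(1-\theta)(\theta-\beta)=\lambda r$ (the factor $1/(1-\theta)$ from $\mathbb{E}[e^{\theta E}]$ forcing $\theta<1$), i.e.\ when $\theta$ is a root of $\theta^2-(1+\lambda(1-r))\theta+\lambda=0$. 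For $\lambda\le(1+\sqrt r)^{-2}$ this quadratic has real roots $\theta_-\le\theta_+$ in $(\beta,1)$, and I would verify directly that $e^{\theta_-\ell}$ is a nonnegative solution; integrating gives $\mathbb{E}[N]=\int_0^\infty e^{(\theta_--1)\ell}\,d\ell=1/(1-\theta_-)$, which simplifies to the stated closed form (and to $(1+\sqrt r)/\sqrt r$ at criticality, where $\theta_-=\theta_+=1/(1+\sqrt r)$). Equivalently, $\mu:=\mathbb{E}[N]$ is the \emph{smaller} root of the fixed-point equation $\lambda r\,\mu^2-(1-\lambda(1-r))\mu+1=0$.

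The main obstacle is to justify that the correct solution is $e^{\theta_-\ell}$ rather than the faster-growing $e^{\theta_+\ell}$, since the linear equation admits the whole family $c_+e^{\theta_+\ell}+c_-e^{\theta_-\ell}$ with $c_++c_-=1$. Here I would invoke the standard fact that an expected total progeny is the \emph{minimal} nonnegative solution of its renewal equation, realised as the increasing limit (by monotone convergence) of the truncations $V_n$ that count only the first $n$ mutation-generations. Minimality gives $V\le e^{\theta_-\ell}$; then $h:=e^{\theta_-\ell}-V\ge0$ solves the homogeneous equation with $h(0)=0$, which forces $c_+=0$ and hence $h\equiv0$, so $V=e^{\theta_-\ell}$ exactly. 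This selection step, together with checking the elementary identities relating $\theta_\pm$, $\beta$ and the discriminant $(1-\beta)^2-4\lambda r=[1+\lambda(1-r)]^2-4\lambda$, is where the real work lies.

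For part $(ii)$ I would argue by contradiction. If $\beta=\lambda(1-r)\ge1$ then already $\mathbb{E}[N]\ge\mathbb{E}[e^{\beta C_1}]=\infty$. If $\beta<1$ and $\lambda>(1+\sqrt r)^{-2}$, one checks that the discriminant $(1-\beta)^2-4\lambda r$ is negative, so $\theta_\pm$ are non-real. Assuming $\mathbb{E}[N]=\mu<\infty$, the same equation gives, for $s\ge1$, the rational transform $\widehat V(s)=\int_0^\infty V(\ell)e^{-s\ell}\,d\ell=(s-1+\lambda r\mu)/(s^2-(1+\beta)s+\lambda)$, whose denominator has no real zero. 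Since $V\ge0$, Pringsheim's theorem forces the abscissa of convergence to be a real singularity of $\widehat V$; as there is none, the transform would converge for every real $s$, contradicting $V\ge e^{\beta\ell}>0$. Hence $\mathbb{E}[N]=\infty$. (Alternatively, $(ii)$ is immediate from the survival half of Theorem~\ref{TNS}, proved independently, since survival entails infinitely many births and thus $N=\infty$ with positive probability.)
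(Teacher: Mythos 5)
Your proposal is correct, and it reaches the result by a genuinely different route from the paper's. The paper follows Bordenave's scheme: it sets up an individual-level recursive distributional equation for $Y(t)$, the progeny given the root dies at time $t$, in which copy-children and mutant-children both generate recursive terms; taking expectations yields an integral equation that is converted into the ODE $x''-[1+\lambda(1-r)]x'+\lambda x=0$, and the admissible solution is selected through the value-truncations $\mathbb{E}[\min\{Y(t),n\}]$, the mapping $\Psi_u$, and Lemmas \ref{key}--\ref{lembor}, which provide the growth bound that kills the coefficient of the faster exponential; part $(ii)$ comes from the oscillatory (complex-root) solutions for $\lambda\in\big((1+\sqrt r)^{-2},(1-\sqrt r)^{-2}\big)$ plus monotonicity in $\lambda$. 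You instead aggregate each type into a Yule block of rate $\lambda(1-r)$ and write a block-level renewal equation in which only mutants recurse, the ancestral-order rule being encoded in the shifted window $(\ell-t)+E$; this produces the same characteristic quadratic and the same solution family, but your selection step --- the expected progeny is the \emph{minimal} nonnegative solution, being the increasing limit of generation-truncations, each dominated (by a one-line induction) by any nonnegative solution --- replaces the paper's $\Psi_u$ machinery. For $(ii)$, your Laplace-transform/Pringsheim argument plays the role of the paper's oscillation argument (both exploit that complex roots are incompatible with positivity), and your case split $\lambda(1-r)\ge 1$ versus $\lambda(1-r)<1$ even dispenses with the appeal to monotonicity in $\lambda$; your fallback --- quoting the survival half of Theorem \ref{TNS}, which the paper proves via Lemma \ref{lemaald} independently of Proposition \ref{moments} --- is likewise legitimate and non-circular.

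Two steps need tightening in a full write-up. First, in the uniqueness argument, $h(0)=0$ alone does not force $c_+=0$: every solution of the homogeneous equation satisfies $h(0)=0$, and this only gives $c_-=-c_+$, i.e.\ $h=c_+\big(e^{\theta_+\ell}-e^{\theta_-\ell}\big)$. What kills $c_+$ is the two-sided bound $0\le h\le e^{\theta_-\ell}$ (nonnegativity rules out $c_+<0$; the upper bound, since $e^{\theta_+\ell}$ outgrows $e^{\theta_-\ell}$, rules out $c_+>0$), together with the routine regularity bootstrap (monotonicity of $V$, hence local boundedness, hence continuity and twice-differentiability) needed to pass from the integral equation to the ODE; both bounds are already in your setup, but the stated one-line justification is not the actual mechanism. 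Second, in the Pringsheim step, the rational function does have singularities --- the complex-conjugate poles with real part $\big(1+\lambda(1-r)\big)/2$ --- so ``$\widehat V$ has no real singularity'' requires an identity-theorem argument in the half-plane of convergence: if the abscissa of convergence lay to the right of the poles' real part, Pringsheim's real singular point would be absent, a contradiction; if it lay to the left, $\widehat V$ would be analytic at points where it coincides with a function that blows up, again a contradiction. Both branches work, but the argument must distinguish them.
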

	We note that  $\mathbb{E}[N]<\infty$ implies that $N<\infty$ almost surely. Consequently, the pathogens die out with probability 1 for $\lambda\leq(1+\sqrt{r})^{-2}$. For the necessary condition in Theorem \ref{TNS},  we will show that the  $\mathcal{B}(\lambda, r)$ survives for $\lambda> (1+\sqrt{r})^{-2}$. 
		\begin{figure}[h]
	\centering		\includegraphics[scale=0.65]{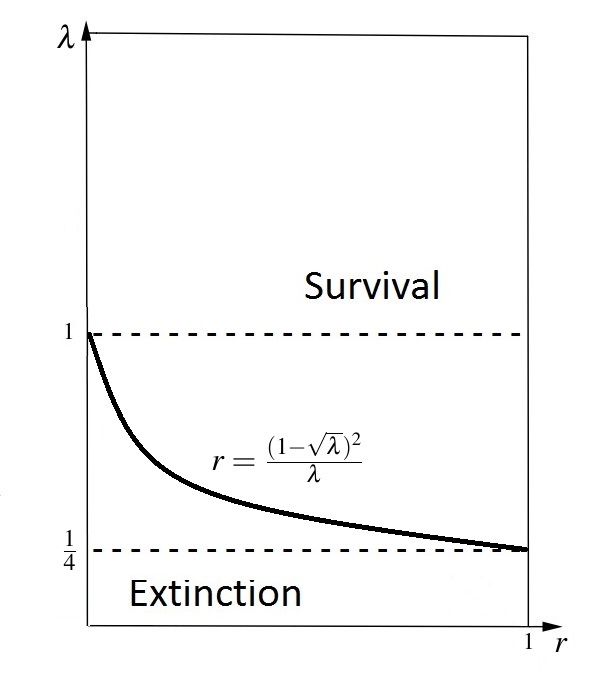}
	\caption{The process $\mathcal{B}(\lambda,r)$ survives with positive probability if and only if $(\lambda,r)$ lies above the critical curve $r=(1-\sqrt{\lambda})^{2}/\lambda$.}
\label{figura2}
		\end{figure}

	\subsection{Connections with the birth and assassination process}
	
    Aldous and Krebs~\cite{aldous} introduced the \textit{birth and assassination process} to investigate the scaling limit of a certain queuing system with blocking, presented in Tsitsiklis~\textit{et al.}~\cite{TPH}. Aldous and Krebs' model is a variant of a continuous-time branching process in which each  particle reproduces independently at rate $\lambda$ and has an independent random killing time 
	whose `clock' only starts ticking after its parent is dead. Recently, this model has attracted considerable attention, in particular because it exhibits an interesting heavy-tail phenomena and can be related to different SIR models and branching random walks \cite{AB,B2008,B2014,GLMR,K}.
	
	The birth and assassination process with killing times given by independent exponential random variables with mean 1 is equivalent to  $\mathcal{B}(\lambda, 1)$. The process $\mathcal{B}(\lambda, r)$ can be seen as the a variation of the birth and assassination process in which each particle is removed together with its parent with probability $1-r$, and with probability $r$, it has its independent `clock' that starts counting down its lifetime after its parent has been removed from the system. For the $\mathcal{B}(\lambda, 1)$ it is known that the processes dies out if and only if $\lambda\leq 1/4$,  see \cite{aldous, B2008}. Our Theorem \ref{TNS} extends this result for  $\mathcal{B}(\lambda, r)$ with $r\in (0,1]$.

\section{Model adding deleterious mutations}

In this section we extend the model of the previous section by allowing mutations to be either beneficial or deleterious.

Fix $\lambda>0$, $r\in(0,1]$ and $p\in(0,1]$. As before, each pathogen gives birth independently at rate $\lambda$. When a new pathogen is born, it is
\begin{itemize}
    \item a \emph{copy} of its parent (same type) with probability $1-r$;
    \item a \emph{mutant} with probability $r$. Conditional on mutation, the mutation is
    \begin{itemize}
        \item \emph{beneficial} with probability $p$, in which case the newborn pathogen is a new type that has never appeared in the system and is antigenically novel relative to its parent type (and is therefore cleared no earlier than its ancestor lineages);
        \item \emph{deleterious} with probability $1-p$, in which case the newborn pathogen is \emph{sterile}, i.e.\ it produces no offspring.
    \end{itemize}
\end{itemize}
The immune response is the same as in the original model: when a \textit{mutant} type first appears, it receives an independent exponential clock of rate $1$ which starts ticking only after its ancestor type has been eliminated; when the clock rings, all pathogens of that type are eliminated simultaneously. Sterile mutant types do not produce descendants but they receive immune clocks and are eventually eliminated. Since a fraction of reproduction events are “wasted” on offspring that cannot reproduce, the survival--extinction phase transition may depend on $(r,p)$. We denote this extended process by $\mathcal{B}(\lambda,r,p)$.

\begin{definition}
We say that the process $\mathcal{B}(\lambda,r,p)$ \emph{survives} if, with positive probability, there are pathogens alive for all $t>0$; otherwise it \emph{dies out}.
\end{definition}

Our main result for this extension provides an explicit characterization of the phase transition.

\begin{theorem}\label{TNSrp}
Let $\lambda>0$, $r\in(0,1]$ and $p\in(0,1]$. The process $\mathcal{B}(\lambda,r,p)$ dies out almost surely if and only if
\begin{equation}\label{eq:theta-closed}
\lambda\left(\sqrt{1-r+rp}+\sqrt{rp}\right)^2\le 1.
\end{equation}
\end{theorem}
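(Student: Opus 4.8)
The plan is to reduce $\mathcal{B}(\lambda,r,p)$ to the pure beneficial-mutation model $\mathcal{B}(\lambda',r')$ with suitable effective parameters via a thinning argument, and then invoke Theorem~\ref{TNS}. The starting observation is that sterile pathogens produce no offspring and each is eventually cleared, so they cannot sustain the population on their own; hence $\mathcal{B}(\lambda,r,p)$ survives if and only if the subpopulation of \emph{fertile} pathogens survives. I would make this precise as follows: if the fertile population dies out, it does so at an almost surely finite time, after which no new pathogens of any kind are born (only fertile pathogens reproduce); the finitely many sterile pathogens present at that time each have finite remaining lifetime, so the whole process is extinct almost surely. The converse implication is immediate.

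Next I would identify the law of the fertile subpopulation. Each fertile pathogen gives birth at rate $\lambda$, and by independent marking each birth produces a fertile offspring (a copy with probability $1-r$, or a beneficial mutant with probability $rp$), with total probability $1-r+rp$, and a sterile offspring with probability $r(1-p)$. By Poisson thinning, the fertile offspring of a given fertile pathogen therefore arrive at rate $\lambda':=\lambda(1-r+rp)$, and conditional on such a birth the offspring is a beneficial mutant, i.e.\ a genuinely new fertile type subject to the ancestral-order clock, with probability $r':=rp/(1-r+rp)$, and a copy otherwise. I would then check that the type structure and killing rule restricted to fertile types coincide exactly with those of $\mathcal{B}(\lambda',r')$: every new type is born from a fertile pathogen, the sterile types are leaves of the ancestry tree and therefore never block any other type's clock, and the clock of each fertile mutant type starts precisely after its fertile ancestor type has been eliminated. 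Since $r'\in(0,1]$ and $\lambda'>0$ for all admissible $(\lambda,r,p)$, the fertile subpopulation is distributed as $\mathcal{B}(\lambda',r')$ and Theorem~\ref{TNS} applies.

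Finally, Theorem~\ref{TNS} gives that the fertile subpopulation dies out if and only if $\lambda'(1+\sqrt{r'})^2\le 1$. Substituting the effective parameters and simplifying,
\begin{equation*}
\lambda'\bigl(1+\sqrt{r'}\bigr)^2=\lambda(1-r+rp)\cdot\frac{\bigl(\sqrt{1-r+rp}+\sqrt{rp}\bigr)^2}{1-r+rp}=\lambda\bigl(\sqrt{1-r+rp}+\sqrt{rp}\bigr)^2,
\end{equation*}
so the extinction criterion is exactly \eqref{eq:theta-closed}, as claimed.

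The main obstacle I anticipate lies in the second step: rigorously verifying that the fertile subpopulation is genuinely distributed as $\mathcal{B}(\lambda',r')$, and not merely as a branching process with the correct offspring statistics. One must confirm that the ancestral-order killing rule is faithfully inherited, in particular that the delayed clocks of fertile mutant types depend only on the elimination times of their fertile ancestors and are unaffected by the presence of the sterile pathogens, whose own clocks and eliminations are irrelevant because they have no descendants. The Poisson thinning and the conditional (copy vs.\ beneficial) offspring distribution are routine once the independence of the marking from the birth process is noted, so the only delicate point is this faithful transfer of the type-level, order-constrained immune dynamics to the thinned process.
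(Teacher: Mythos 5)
Your proposal is correct and takes essentially the same approach as the paper's proof: discard the sterile individuals, identify the fertile subprocess with $\mathcal{B}\bigl(\lambda(1-r+rp),\,rp/(1-r+rp)\bigr)$ via Poisson thinning, and apply Theorem~\ref{TNS} with the same algebraic simplification. Your extra care in verifying that extinction of the fertile subpopulation forces extinction of the whole process, and that sterile types never block any immune clock, simply makes explicit details the paper leaves implicit.
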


\begin{obs}[Phase diagram for fixed $p$]\label{obs:phase-diagram}
By standard coupling arguments, the survival probability of $\mathcal{B}(\lambda,r,p)$ is a non-decreasing function of $\lambda$. 
Define the critical value in $\lambda$ by
\[
\lambda_c(r,p):=\big(\sqrt{1-r+rp}+\sqrt{rp}\big)^{-2},
\]
so that, by Theorem~\ref{TNSrp}, the process survives if and only if $\lambda>\lambda_c(r,p)$.

Fix $p\in(0,1]$. 
The qualitative shape of the critical curve $r\mapsto \lambda_c(r,p)$ depends on whether $p\ge \tfrac12$ or $p<\tfrac12$.

\smallskip
\noindent\textbf{(i) Case $p\ge \tfrac12$.} 
In this regime $r\mapsto \lambda_c(r,p)$ is strictly decreasing on $(0,1]$, with
\[
\lambda_c(0^+,p)=1
\qquad\text{and}\qquad
\lambda_c(1,p)=\frac{1}{4p}.
\]
Consequently, if $\lambda\le \tfrac{1}{4p}$ then $\mathcal{B}(\lambda,r,p)$ dies out for all $r\in(0,1]$, while if $\lambda\ge 1$ it survives for all $r\in(0,1]$.
For intermediate values $\tfrac{1}{4p}<\lambda<1$, there is a unique threshold $r_-(\lambda,p)\in(0,1)$ such that survival holds if and only if $r>r_-(\lambda,p)$, where
\[
r_-(\lambda,p)=\frac{\big(\sqrt{p}-\sqrt{\lambda+p-1}\big)^2}{\lambda}.
\]

\smallskip
\noindent\textbf{(ii) Case $p< \tfrac12$.} 
In this regime the critical curve is \emph{non-monotone}: it is strictly decreasing on $(0,r^\ast)$ and strictly increasing on $(r^\ast,1]$, where
\[
r^\ast=\frac{p}{1-p}\in(0,1)
\]
is the unique minimizer and $\lambda_c(r^\ast,p)=1-p$ is the minimum value.
In particular, if $\lambda\le 1-p$ then extinction holds for all $r\in(0,1]$.
When $1-p<\lambda<\min\{1,\frac{1}{4p}\}$, the equation $\lambda_c(r,p)=\lambda$ has two solutions $0<r_-(\lambda,p)<r_+(\lambda,p)<1$ and
\[
\mathcal{B}(\lambda,r,p)\ \text{survives}\quad\Longleftrightarrow\quad r\in\big(r_-(\lambda,p),\,r_+(\lambda,p)\big),
\]
 where 
\[
r_\pm(\lambda,p)
=\frac{\big(\sqrt{p}\pm\sqrt{\lambda+p-1}\big)^2}{\lambda}
=\frac{\lambda+2p-1\ \pm\ 2\sqrt{p(\lambda+p-1)}}{\lambda}.
\]
 This gives the \emph{intermediate-mutation window} in $r$. If $\frac{1}{4}<p<\frac{1}{2}$ and $\frac{1}{4p}<\lambda<1$ then the survival holds if and only if $r>r_{-}(\lambda,p)$.
If $p<\tfrac14$ and $1\le \lambda<\tfrac{1}{4p}$, then $\lambda>\lambda_c(r,p)$ holds for small mutation probabilities but fails for large ones: there is a unique $r_+(\lambda,p)\in(r^\ast,1)$ such that survival holds if and only if $r<r_+(\lambda,p)$.
Finally, if $\lambda\ge \max\{1,\tfrac{1}{4p}\}$ then survival holds for all $r\in(0,1]$.

Figure~\ref{fig:curvascriticas} illustrates the critical curves $r\mapsto \lambda_c(r,p)$ for several values of $p$.
\end{obs}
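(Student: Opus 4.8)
\textit{Proof proposal.} The plan is to treat Remark~\ref{obs:phase-diagram} as a one–variable calculus analysis of the explicit function $\lambda_c(\cdot,p)$, taking Theorem~\ref{TNSrp} (survival iff $\lambda>\lambda_c(r,p)$) as given. Writing $g(r):=\sqrt{1-(1-p)r}+\sqrt{pr}$, so that $\lambda_c(r,p)=g(r)^{-2}$, I would first record that, since $t\mapsto t^{-2}$ is strictly decreasing on $(0,\infty)$, the map $r\mapsto\lambda_c(r,p)$ is strictly decreasing (resp.\ increasing) exactly where $g$ is strictly increasing (resp.\ decreasing), and that $\lambda_c$ is minimized precisely where $g$ is maximized. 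Thus the entire qualitative description reduces to locating the maximum of $g$ on $(0,1]$.

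Next I would differentiate: $g'(r)=\tfrac12\big(\sqrt{p}/\sqrt{r}-(1-p)/\sqrt{1-(1-p)r}\big)$. Since the first term is strictly decreasing in $r$ and the second strictly increasing, $g'$ is strictly decreasing, so $g$ is strictly concave and has at most one interior critical point; solving $g'(r)=0$ yields the unique candidate $r^\ast=p/(1-p)$, which is therefore the unique maximizer when it lies in the domain. The dichotomy in the statement is then simply $r^\ast<1\iff p<\tfrac12$: for $p\ge\tfrac12$ one has $r^\ast\ge1$, so $g$ is increasing and $\lambda_c$ decreasing throughout $(0,1]$; for $p<\tfrac12$ one has $r^\ast\in(0,1)$, giving the non-monotone (valley) shape, decreasing on $(0,r^\ast)$ and increasing on $(r^\ast,1]$. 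The three reference values follow by direct evaluation: $g(0^+)=1$, $g(1)=2\sqrt p$ and $g(r^\ast)=(1-p)^{-1/2}$, hence $\lambda_c(0^+,p)=1$, $\lambda_c(1,p)=\tfrac1{4p}$ and $\lambda_c(r^\ast,p)=1-p$.

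To obtain the thresholds $r_\pm$, I would solve $\lambda_c(r,p)=\lambda$, equivalently $g(r)=\lambda^{-1/2}$ (both sides positive). Setting $a:=\sqrt{\lambda p r}$ and $b:=\sqrt{\lambda(1-(1-p)r)}$ turns this into $a+b=1$; using $b^2-a^2=\lambda(1-r)=(b-a)(a+b)=1-2a$ to eliminate $r$, and then substituting $r=(\lambda-1+2a)/\lambda$ back into $a^2=\lambda p r$, yields the quadratic $a^2-2pa-p(\lambda-1)=0$. Its roots $a_\pm=p\pm\sqrt{p(\lambda+p-1)}$ give $r_\pm=(\lambda-1+2a_\pm)/\lambda=(\sqrt p\pm\sqrt{\lambda+p-1})^2/\lambda$, the two forms appearing in the statement; in particular real roots require $\lambda\ge 1-p$, matching the minimum value found above.

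Finally, I would assemble the phase diagram by combining the monotonicity picture with the intermediate value theorem: on each monotone branch of $\lambda_c$ the horizontal line at height $\lambda$ crosses at most once, and the survival region $\{\lambda>\lambda_c\}$ is the part of the curve lying below that line. Reading off ``$r$ above a threshold'', ``$r$ below a threshold'', or ``$r$ in an interval'' in each regime is then bookkeeping governed by the ordering of the three reference heights $1-p$, $\tfrac1{4p}$, $1$. The elementary fact that organizes this ordering is $\tfrac1{4p}\ge 1-p$ for all $p\in(0,1]$, equivalent to $(2p-1)^2\ge0$ with equality only at $p=\tfrac12$; together with the sign of $\tfrac1{4p}-1$ (i.e.\ $p\lessgtr\tfrac14$) this separates the sub-cases $\lambda\le 1-p$, $1-p<\lambda<\min\{1,\tfrac1{4p}\}$, and so on. I expect the main (though purely bookkeeping) obstacle to be checking, in each sub-case, that the algebraic roots $r_\pm$ are genuine crossings in $(0,1)$ rather than extraneous roots introduced in passing from $g(r)=\lambda^{-1/2}$ to the quadratic; this is confirmed by verifying $a_\pm\in[0,1]$ and by matching each root to the branch predicted by concavity ($r_-$ on $(0,r^\ast)$, $r_+$ on $(r^\ast,1]$), for instance using that $r_+=1$ exactly when $\lambda=\tfrac1{4p}$ and $r_-=0$ exactly when $\lambda=1$.
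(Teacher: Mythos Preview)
Your proposal is correct. The paper itself does not supply a proof of this remark: it is stated as a direct reading of the explicit critical value from Theorem~\ref{TNSrp}, with the monotonicity, the location of $r^\ast$, and the formulas for $r_\pm$ simply asserted. Your one-variable calculus analysis of $g(r)=\sqrt{1-(1-p)r}+\sqrt{pr}$ (concavity via $g'$ strictly decreasing, unique interior critical point $r^\ast=p/(1-p)$, and the substitution $a=\sqrt{\lambda pr}$, $b=\sqrt{\lambda(1-(1-p)r)}$ reducing $\lambda_c(r,p)=\lambda$ to a quadratic in $a$) is exactly the kind of elementary verification the paper leaves to the reader, and all the computations check out. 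The only cosmetic point is the degenerate case $p=1$, where $1-p=0$ and $r^\ast$ is formally undefined; there $g(r)=1+\sqrt r$ is manifestly increasing, which is consistent with your ``$r^\ast\ge 1$'' conclusion in case~(i).
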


\begin{figure}[h!]
\centering
\includegraphics[width=0.70\textwidth]{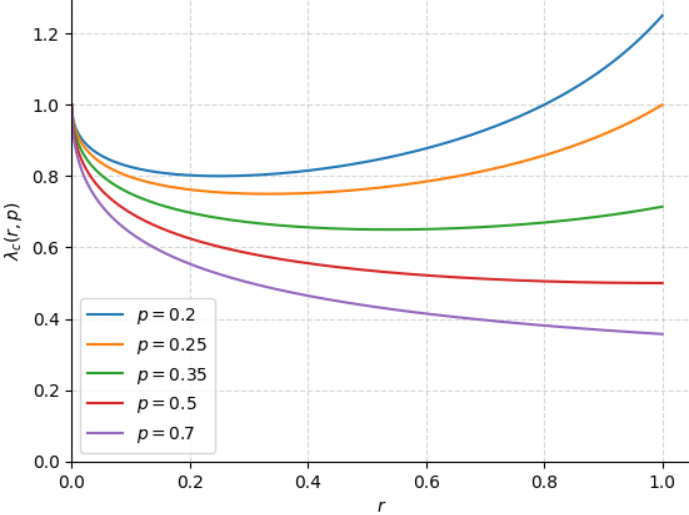}
\caption{Critical curves $r\mapsto \lambda_c(r,p)$ for different values of $p$. For fixed $p$, the region above the curve corresponds to survival ($\lambda>\lambda_c(r,p)$), while the region on and below corresponds to almost sure extinction ($\lambda\le \lambda_c(r,p)$).}
\label{fig:curvascriticas}
\end{figure}

Allowing mutations to be beneficial only with probability $p$ makes explicit the trade-off that mutation both \emph{creates immune-escape variants} (needed for persistence) and \emph{wastes reproduction on unfit offspring} (mutational load). 
As a consequence, the survival criterion in Theorem~\ref{TNSrp} can generate an \emph{intermediate-mutation} regime in which persistence is possible only for a range of mutation probabilities $r$: too little mutation produces too few successful new types, while too much mutation generates too many deleterious lineages.
This behavior echoes Sasaki's antigenic drift/switching models \cite{Sasaki1994} and classical error-threshold/lethal-mutagenesis mechanisms \cite{Eigen1971,BullSanjuaanWilke2007,SteinmeyerWilke2009}, and it is also consistent with recent traveling-wave descriptions of immune escape and mutation-rate trade-offs \cite{YanNeherShraiman2019,MarchiLassigMoraWalczak2021,ChardesMazzoliniMoraWalczak2023,MoraWalczak2023}.

Next, we analyze the total progeny of the process $\mathcal B(\lambda,r,p)$.  Let $N_{\mathrm v}$ be the total number of \emph{viable} individuals ever born, including the initial ancestor and all
copies and beneficial mutants, but excluding sterile mutants. Let $S$ be
the total number of sterile individuals ever born. Define
\[
N^{(p)} := N_{\mathrm v}+S,
\]
the total number of pathogens ever born.

\begin{prop}
\label{prop:moments-B-lrp}
Let $\lambda>0$, $r\in(0,1]$ and $p\in(0,1]$, and consider the process $\mathcal B(\lambda,r,p)$.

Set
\[
\Delta(\lambda,r,p):=\bigl(1+\lambda(1-r)\bigr)^2-4\lambda(1-r+rp),
\qquad
\lambda_c(r,p):=\frac{1}{\bigl(\sqrt{1-r+rp}+\sqrt{rp}\bigr)^2}.
\]
Then:
\begin{enumerate}
\item If $0<\lambda\le \lambda_c(r,p)$, one has $\mathbb{E}[N^{(p)}]<\infty$ and
\begin{eqnarray*}
   \mathbb{E}[N_{\mathrm v}]&=&
\frac{2}{\,1-\lambda(1-r)+\sqrt{(1+\lambda(1-r))^2-4\lambda(1-r+rp)}\,}. \\
\mathbb{E}[S]&=&\frac{r(1-p)}{1-r(1-p)}\,(\mathbb{E}[N_{\mathrm v}]-1).\\
\mathbb{E}[N^{(p)}]&=& \frac{1}{1-r(1-p)}\left(
\frac{2}{\,1-\lambda(1-r)+\sqrt{\Delta(\lambda,r,p)}\,}
-r(1-p)
\right).
\end{eqnarray*}
In particular, at criticality $\lambda=\lambda_c(r,p)$ (so that $\Delta(\lambda,r,p)=0$),
\[
\mathbb{E}[N^{(p)}]
=
\frac{1}{1-r(1-p)}\left(
\frac{1}{1-\lambda(1-r)}-r(1-p)
\right).
\]
\item If $\lambda>\lambda_c(r,p)$, then $\mathbb{E}[N^{(p)}]=\mathbb{E}[N_{\mathrm v}]=\mathbb{E}[S]=\infty$.
\end{enumerate}
\end{prop}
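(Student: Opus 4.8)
The plan is to reduce everything to the beneficial-mutation model of Section~2 via a \emph{thinning} construction. First I would argue that the sub-population of \emph{viable} pathogens of $\mathcal{B}(\lambda,r,p)$ — the ancestor together with all copies and beneficial mutants — evolves, on its own, exactly as a process $\mathcal{B}(\lambda',r')$ with effective parameters
\[
\lambda'=\lambda\bigl(1-r(1-p)\bigr)=\lambda(1-r+rp),
\qquad
r'=\frac{rp}{1-r+rp}.
\]
The key observation is that sterile mutants produce no offspring and form \emph{leaves} of the type-ancestry tree; consequently their immune clocks unlock no descendants, and the ancestral-order killing rule among viable types is unaffected by them, so the viable process is autonomous. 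Moreover a viable pathogen generates viable offspring at rate $\lambda\bigl(1-r(1-p)\bigr)=\lambda'$, and conditional on a viable birth the newborn is a beneficial mutant with probability $rp/(1-r+rp)=r'$ and a copy otherwise; this is precisely the birth/mutation mechanism of $\mathcal{B}(\lambda',r')$, and the delayed-clearance rule transfers verbatim. One checks $\lambda'>0$ and $r'\in(0,1]$ (with $p=1$ recovering $\mathcal{B}(\lambda,r)$ and $S=0$), so Proposition~\ref{moments} applies.

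Next I would transport the moment formula for $\mathbb{E}[N_{\mathrm v}]$. Since $N_{\mathrm v}$ equals the total progeny of $\mathcal{B}(\lambda',r')$, Proposition~\ref{moments}(i) yields its value once the two algebraic identities
\[
\lambda'(1-r')=\lambda(1-r),
\qquad
\bigl(1+\lambda'(1-r')\bigr)^2-4\lambda'=\Delta(\lambda,r,p)
\]
are verified by direct substitution; these immediately give the stated expression for $\mathbb{E}[N_{\mathrm v}]$ in terms of $\Delta$. The same substitution shows that the subcritical condition $\lambda'\le(1+\sqrt{r'})^{-2}$ of Proposition~\ref{moments} is equivalent to $\lambda\le\lambda_c(r,p)$, i.e.\ to \eqref{eq:theta-closed}, so the two cases of the two propositions match up.

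For $\mathbb{E}[S]$ I would use a compensator argument. Let $A(t)$ denote the number of viable pathogens alive at time $t$. Every alive viable pathogen gives birth at rate $\lambda$, and each birth is independently sterile with probability $r(1-p)$ and viable with probability $1-r(1-p)$; hence the counting process of sterile births has stochastic intensity $\lambda r(1-p)A(t)$, while that of viable (non-ancestor) births has intensity $\lambda\bigl(1-r(1-p)\bigr)A(t)$, the two integrated intensities differing only by the constant factors $r(1-p)$ versus $1-r(1-p)$. Taking expectations of the associated Dynkin martingales and letting $t\to\infty$ (monotone convergence, using $\mathbb{E}\!\int_0^\infty A(s)\,ds<\infty$ in the subcritical regime, which is exactly $\mathbb{E}[N_{\mathrm v}]<\infty$) gives
\[
\mathbb{E}[S]=\frac{r(1-p)}{1-r(1-p)}\,\bigl(\mathbb{E}[N_{\mathrm v}]-1\bigr),
\]
since $\mathbb{E}[N_{\mathrm v}]-1$ counts the viable non-ancestor births. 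Adding $\mathbb{E}[N^{(p)}]=\mathbb{E}[N_{\mathrm v}]+\mathbb{E}[S]$ and simplifying yields the displayed formula for $\mathbb{E}[N^{(p)}]$, and specializing to $\Delta=0$ gives the value at criticality. For part (2), when $\lambda>\lambda_c(r,p)$ the equivalent inequality $\lambda'>(1+\sqrt{r'})^{-2}$ together with Proposition~\ref{moments}(ii) gives $\mathbb{E}[N_{\mathrm v}]=\infty$; then $\mathbb{E}[N^{(p)}]\ge\mathbb{E}[N_{\mathrm v}]=\infty$, and the same intensity identity (or the observation that on survival infinitely many viable births occur, each independently spawning sterile offspring) forces $\mathbb{E}[S]=\infty$.

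The main obstacle is making the thinning construction fully rigorous: one must verify carefully that the viable process is genuinely autonomous — that the immune clocks and the ancestral-order killing of the viable types are measurable with respect to, and distributed exactly as in, $\mathcal{B}(\lambda',r')$ once the sterile marks are integrated out — and that the conditional mutation probability $r'$ respects the clock-sharing structure. Once autonomy is established, the remaining steps are the routine algebraic identities and a standard compensator/optional-stopping argument, the latter requiring only the subcritical integrability already guaranteed by $\mathbb{E}[N_{\mathrm v}]<\infty$.
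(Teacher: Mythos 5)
Your proposal is correct and follows essentially the same route as the paper: the same thinning reduction of the viable sub-population to $\mathcal{B}(\lambda(1-r+rp),\,rp/(1-r+rp))$, the same algebraic identities $\lambda'(1-r')=\lambda(1-r)$ and $(1+\lambda'(1-r'))^2-4\lambda'=\Delta(\lambda,r,p)$ to transport Proposition~\ref{moments}, and the same ratio identity $\mathbb{E}[S]=\tfrac{r(1-p)}{1-r(1-p)}(\mathbb{E}[N_{\mathrm v}]-1)$. The only (minor) difference is how you formalize that last identity --- you use stochastic intensities and Dynkin martingales, while the paper conditions on the viable sub-process and invokes binomial thinning of the common rate-$\lambda$ birth process --- but both rest on exactly the same observation that sterile and viable births are independent thinnings of one underlying birth stream.
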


\section{Proofs}
	
	\subsection{Beneficial-mutation model}
	We begin by introducing some notations and a formal definition of model $\mathcal{B}(\lambda,r).$
	Let $\mathcal{N}=\displaystyle\cup_{n=0}^{\infty}{\mathbb{N}}^n$, where $\mathbb{N}$ is the set of positive integers.
	Each $\textbf{n}\in \mathcal{N}$ is a finite $n-$tuple of positive integers, with ${\mathbb{N}}^0=\emptyset$.
	
	The initial pathogen in the system is identified as $\emptyset$, and declared to be the $0$-th generation. We note that, each descendant can be mapped to an element of $\mathcal{N}$ as follows. For 
	$\textbf{n}^{'}=(n_1,...,n_{k-1})$ and $\textbf{n}=(n_1,...,n_{k-1}, n_k)$ in $\mathcal{N}$, 
	while $\textbf{n}$ is the ${n_k}$-th child of $\textbf{n}^{'}$, both are descendants of
	$\emptyset$, being $\textbf{n}^{'}$ at the $(k-1)$-th generation and $\textbf{n}$ at the $k$-th generation. 
	Now, for $\textbf{n}\in \mathcal{N}$, let $k(\textbf{n})$ denote the number of coordinates in $\textbf{n}$.  We set $k(\emptyset)=0$.
	
	Next, for $r\in(0,1]$ and $\lambda >0$, we formally introduce the  process $\mathcal{B}(\lambda,r).$  For $\textbf{n}\in\mathcal{N}$, let $\{X_\textbf{n}\}$ be a family of independent Poisson process with common arrival rate $\lambda$, and $\{K_\textbf{n}\}$ a family of independent exponential random variables of rate 1. 
	
	At time $0$, the initial pathogen $\emptyset$ produces offspring until time $D_{\emptyset}=K_{\emptyset}$. Each of its offspring is independently tagged as \textit{separated} with probability $r$. Every pathogen begins producing offspring from the moment of its birth, following its associated Poisson process. At time $D_{\emptyset}$, the initial pathogen $\emptyset$, all its children that are not tagged as \textit{separated} and all descendants that do not descend of descendants of  $\emptyset$ tagged as \textit{separated}, are removed. Let $k>0$, $\textbf{n}^{'}=(n_1,\dots,n_{k-1})$ and $\textbf{n}=(n_1,\dots,n_{k-1},n_k)$. When a pathogen $\textbf{n}^{'}$ is removed from the system at a certain time $D_{\textbf{n}^{'}}$, all its children that are not tagged as \textit{separated} and all descendants that do not descend of descendants of $\textbf{n}^{'}$ tagged as \textit{separated} are also removed. If $\textbf{n}$ is tagged as \textit{separated}, then it continues to produce offspring until time $D_{\textbf{n}}=D_{\textbf{n}^{'}}+K_{\textbf{n}}$. At time $D_{\textbf{n}}$,  $\textbf{n}$ is removed from the system, together with all its children that are not tagged as \textit{separated} and all descendants that do not descend of descendants of $\textbf{n}$ tagged as \textit{separated}.

	In order to prove the necessary condition in Theorem \ref{TNS}, it is convenient to define  $\mathcal{B}^*(\lambda,r)$, an alternative version of $\mathcal{B}(\lambda,r)$ in which the initial pathogen also receives a killing time given by
	a mixed random variable  which is 0 with probability $1-r$, and an exponential of rate 1, with probability $r$.

	Let $q$ and $q^*$ denote the probability of extinction of $\mathcal{B}(\lambda,r)$ and $\mathcal{B}^{*}(\lambda,r)$, respectively. Note that conditioning on the killing time of the initial pathogen in $\mathcal{B}^{*}(\lambda,r)$ we have that 
	$$ q^*=(1-r)\cdot 1 + r\cdot q.  $$
	It is easy to check that $q^*=1$ if and only if $q=1$. Finally, we present a lemma that we need to study the survival of $\mathcal{B}^{*}(\lambda,r)$. 
	\begin{lem}
		\label{lemaald} Consider $\mathcal{B}^{*}(\lambda,r)$. If $\lambda>(1+\sqrt{r})^{-2}$,  then there exists $k\in\mathbb{N}$ sufficiently large such that  
		$$\sum_{k(\textbf{n})=k}\mathbb{P}(\text{\textbf{n} is born})>1/r.$$
	\end{lem}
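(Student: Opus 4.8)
The plan is to express the sum $\sum_{k(\mathbf{n})=k}\mathbb{P}(\mathbf{n}\text{ is born})$ explicitly as a function of $k$, identify its exponential growth rate, and show that this rate exceeds one precisely when $\lambda>(1+\sqrt{r})^{-2}$; from there, the inequality with the constant $1/r$ follows for $k$ large. The key observation is that whether a pathogen $\mathbf{n}$ at generation $k$ is ever born depends only on the reproductive clocks along its ancestral line, and these are governed by a recursive structure that I can analyze generation by generation.

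\medskip
\noindent\textbf{Setting up the per-generation recursion.} First I would compute the expected number of children of a given type born during the active lifetime of a single pathogen, splitting according to whether a birth is a \emph{separated} (mutant) event or not. Consider a pathogen whose type-clock is an exponential of rate $1$ that begins ticking at some moment; during its active window it produces offspring at rate $\lambda$, each tagged separated with probability $r$. The crucial point, coming from the ancestral-order killing rule, is that a non-separated child shares the parent's killing time and so reproduces during the \emph{same} window, whereas a separated child receives a fresh exponential clock that only starts after the parent's clock rings. I would track the \emph{expected number of separated descendants at generation depth $k$} — equivalently, the expected number of ``fresh clock'' lineages — since every born pathogen lies in the reproductive window attached to exactly one such separated ancestor. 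Writing $a_k:=\sum_{k(\mathbf{n})=k}\mathbb{P}(\mathbf{n}\text{ is born})$, I expect a clean relation of the form $a_{k+1}=c\,a_k$ (or a two-term relation) for an explicit constant $c=c(\lambda,r)$, because the expected progeny contributed by one reproductive window is governed by a fixed generating-function/Laplace computation.

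\medskip
\noindent\textbf{Computing the growth constant.} The heart of the matter is computing the expected number of births in one window and the expected number of separated offspring that open the next generation of windows. For a rate-$\lambda$ birth process run for an independent $\mathrm{Exp}(1)$ time, the expected total number of births is obtained from the standard identity that, conditioning on the clock $K\sim\mathrm{Exp}(1)$, the expected population born by time $t$ in a Yule-type process grows like $e^{\lambda t}$, so integrating against $e^{-t}\,dt$ yields $\lambda/(1-\lambda)$ when $\lambda<1$ and diverges otherwise. More carefully, I would set up the Laplace-transform / fixed-point equation already anticipated by the total-progeny generating function used in Proposition~\ref{moments}; its relevant root determines the per-generation multiplier. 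I expect the growth rate to be exactly $\lambda(1+\sqrt{r})^2$ up to the bookkeeping factor of $r$ for separated offspring, so that $a_k\sim C\,[\lambda(1+\sqrt{r})^2]^{k}$ and the threshold $\lambda(1+\sqrt{r})^2>1$ — i.e.\ $\lambda>(1+\sqrt{r})^{-2}$ — is exactly the condition for exponential growth.

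\medskip
\noindent\textbf{Concluding and the main obstacle.} Once $a_k$ is shown to grow geometrically with ratio strictly exceeding $1$ under the hypothesis $\lambda>(1+\sqrt{r})^{-2}$, the sum $a_k\to\infty$, so in particular $a_k>1/r$ for all sufficiently large $k$, proving the lemma. The main obstacle will be correctly handling the dependence introduced by the ancestral-order rule when setting up the recursion for $a_k$: a born pathogen's existence depends on the entire chain of clocks of its separated ancestors, and I must verify that the expectations factorize cleanly across generations despite these clocks being \emph{sequential} (each starting only after the previous rings) rather than independent and simultaneous. I expect this to reduce to an exact convolution/Laplace computation — the expected window length and the expected number of separated offspring per window are each finite and computable — so that the recursion for the \emph{expectations} $a_k$ decouples even though the underlying process does not. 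Verifying this decoupling, and extracting the precise constant $(1+\sqrt{r})^2$ from the resulting fixed-point equation, is where the real work lies.
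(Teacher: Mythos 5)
Your proposal correctly guesses both the critical condition ($\lambda(1+\sqrt{r})^{2}>1$) and the exponential growth of $a_k:=\sum_{k(\mathbf{n})=k}\mathbb{P}(\mathbf{n}\text{ is born})$, and you correctly locate the difficulty: the dependence along a lineage created by the sequential clocks. But the device you propose to resolve it --- an exact per-generation factorization $a_{k+1}=c\,a_k$ --- is false, and the proposal never replaces it with a working argument. Concretely, in $\mathcal{B}^{*}(\lambda,r)$ one has $a_0=1$ and $a_1=\lambda\,\mathbb{E}[K']=\lambda r$, while a Campbell-formula computation gives $a_2=\lambda^2 r(1+r)$: the reproductive window of a born generation-$1$ pathogen is the residual time left on its parent's clock at its birth \emph{plus} its own clock, so
\[
a_2=\lambda\left(\lambda\,\tfrac{\mathbb{E}[(K'_1)^2]}{2}+\lambda\,\mathbb{E}[K'_1]\,\mathbb{E}[K'_2]\right)=\lambda(\lambda r+\lambda r^2).
\]
Hence $a_1/a_0=\lambda r$ but $a_2/a_1=\lambda(1+r)$, so no constant multiplier exists: the expected number of born children of a born pathogen depends on the residual ``head start'' of the race at its birth, whose law changes from generation to generation (this already fails for $r=1$, the classical birth-and-assassination process, where the ratios $\lambda, 2\lambda,\dots$ only approach the true rate $4\lambda$ in the limit). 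The growth rate $\lambda(1+\sqrt{r})^{2}$ is a genuine large-deviations quantity, not a one-step mean, so the ``decoupling of expectations'' you hope to verify cannot hold in the form you need.

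The missing idea --- and the paper's actual route --- is to avoid recursions altogether by encoding a copy as a mutant with a degenerate clock: attach to every pathogen an i.i.d.\ clock $K'$ equal to $0$ with probability $1-r$ and exponential of rate $1$ with probability $r$. Then the death time of any pathogen is exactly the partial sum of the clocks along its ancestral line, and ``$\mathbf{n}=(n_1,\dots,n_k)$ is born'' becomes the first-passage event
\[
\sum_{i=1}^{j}\sum_{\nu=1}^{n_i}B_{\nu,i}<\sum_{i=1}^{j}K'_i,\qquad j=1,\dots,k,
\]
a race between two random walks with i.i.d.\ increments. From here the Chernoff/change-of-measure argument of Aldous--Krebs (their Lemma 2) applies verbatim with the exponential moment generating function replaced by $\phi(u)=1-r+\frac{r}{1-u}$; since $\min_{u>0}\frac{\lambda}{u}\phi(u)=\lambda(1+\sqrt{r})^{2}$ (the minimum is attained at $u=1/(1+\sqrt{r})$), the hypothesis $\lambda>(1+\sqrt{r})^{-2}$ yields $a_k>(1+\epsilon/2)^{k}$ for all large $k$, which exceeds $1/r$ eventually. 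Your Yule-process side calculation (the $\lambda/(1-\lambda)$ births per window) plays no role in this argument; the relevant object is the random-walk race, not the total progeny of a single window.
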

	\begin{proof}
		
		Let $\{B_{\nu,i}\},$ $\nu,i=1,2,\ldots,$ be independent exponential random variables with parameter $\lambda.$ Let $\{K'_i\}_{i\geq 1}$ be a sequence of independent random variables distributed as a mixed random variable which is zero with probability $1-r,$ and an exponential random variable of rate 1 with probability $r$.\\
		In order to the pathogen $\textbf{n}=(n_1,\ldots,n_k)\in\mathcal{N}$ to be born in $\mathcal{B}^{*}(\lambda,r)$, its ancestor in the $(i-1)-$th generation must have had at least $n_i$ children for each $i \in \{1, \dots, k\}$.  So, 
		$$\mathbb{P}(\text{\textbf{n} is born})=
		\mathbb{P}\left[\sum_{i=1}^j\sum_{\nu=1}^{n_i}B_{\nu,i}<\sum_{i=1}^j K'_i \, , \, j=1,2,\ldots,k(\textbf{n})\right].$$
		The moment generating function of the variables $K'$ is given by
		$\phi(u)=1-r+\frac{r}{1-u}$. Furthermore, the condition $\lambda>(1+\sqrt{r})^{-2}$ is equivalent to $\min_{u>0}\left\{\frac{\lambda}{u}\phi( u)\right\}>1$.  
		Let $\epsilon>0$ be such that $\min_{u>0}\left\{\frac{\lambda}{u}\phi(u)\right\}=1+\epsilon$.  
		Analogously to the proof of Lemma 2 \cite{aldous}, we  obtain that for sufficiently large $k$
		$$\sum_{k(\textbf{n})=k}\mathbb{P}(\text{\textbf{n} is born})>(1+\epsilon/2)^k.$$
	\end{proof}
	
	\begin{proof}[Proof Theorem \ref{TNS} (Necessary condition).]
		In order to show that the process $\mathcal{B}^{*}(\lambda,r)$ survives with positive probability when $\lambda>(1+\sqrt{r})^{-2}$, we define an auxiliary process which has to be slightly different from the one in \cite{aldous} due to the additional dependencies in our model. Let $k\in \mathbb{N}$, we say that  a pathogen in $\mathcal{B}^{*}(\lambda,r)$ is an \textit{special particle} if it is the initial pathogen $\emptyset$, or if it is a new mutation in generation $nk$ that is descended from a special particle in generation $(n-1)k$ without ancestors alive.
		Let $Z_n$ denote the number of special particles in the generation $nk$ of $\mathcal{B}^{*}(\lambda,r)$. Observe that the numbers of special particles created by different special particles are independent and identically distributed. This follows from the fact that only descendants born after their special ancestor has no living ancestors can be special particles. Thus, the process $(Z_n)_{n\geq0}$ defines a Galton-Watson branching process with $Z_0=1$. For each $\textbf{n}$, the probability that  the pathogen $\textbf{n}$ is born and is a mutation is $r\mathbb{P}(\text{\textbf{n} is born})$. 
		Thus, the expected number of special particles in generation $k$ in $\mathcal{B}^{*}(\lambda,r)$ is given by $$\mathbb{E}[Z_1]=\sum_{k(\textbf{n})=k}r\mathbb{P}(\text{\textbf{n} is born}).$$
		Lemma \ref{lemaald} implies that the last quantity is greater than 1 for some $k$ sufficiently large. So, the process $(Z_n)_{n\geq0}$ is a supercritical branching process.
		As a consequence $\mathcal{B}^{*}(\lambda, r)$ and $\mathcal{B}(\lambda, r)$ survive with positive probability.
	\end{proof}

	Next, we prove Proposition \ref{moments} which implies the sufficient condition in Theorem~\ref{TNS}. Our proof follows the approach proposed by Bordenave to study  the moments of the total progeny for the birth and assassination process \cite{B2008} and the chase-escape process on trees \cite{B2014}. The main novelty in the proof of proposition is the consideration of a new term which is related to pathogens which are copies. The key condition to obtain such extension is the analysis presented in  Lemma \ref{key}.
	
	Let $Y(t)$ denote the total number of pathogens that enter the system (including the initial pathogen) given that the initial pathogen dies at time $t$. Note that $\mathbb{E}[Y(0)]=1$. The following lemma from \cite[Lemma 1]{B2008} for $\mathcal{B}(\lambda, 1)$ also holds for our generalization $\mathcal{B}(\lambda, r)$, $r\in (0,1]$.
	\begin{lem}(Bordenave)
		\label{Lemma1}
		Let $t>0$ and $u>0$, if $\mathbb{E}[N^u] <\infty$ then $\mathbb{E}[{Y(t)}^u]<\infty$.
	\end{lem}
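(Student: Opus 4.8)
The plan is to pass between $N$ and $Y(t)$ by conditioning on the killing time of the initial pathogen, and then to reduce the finiteness of $\mathbb{E}[Y(t)^u]$ for a \emph{fixed} $t$ to the integrated quantity $\mathbb{E}[N^u]$ by means of a monotonicity property in $t$. Recall that in $\mathcal{B}(\lambda,r)$ the initial pathogen $\emptyset$ is removed at time $D_\emptyset=K_\emptyset$, an exponential of rate $1$, while $Y(t)$ is by definition the total progeny obtained when this removal time is set equal to $t$. Since $K_\emptyset$ is independent of the birth processes $\{X_\textbf{n}\}$, of the remaining clocks $\{K_\textbf{n}\}_{\textbf{n}\neq\emptyset}$, and of the separation tags, conditioning on $K_\emptyset$ (and Tonelli, the integrand being non-negative) yields the representation
\[
\mathbb{E}[N^u]=\int_0^\infty \mathbb{E}[Y(t)^u]\,e^{-t}\,dt .
\]
First I would record this identity and abbreviate $g(t):=\mathbb{E}[Y(t)^u]$.

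The core of the proof is to show that $g$ is non-decreasing, and I would deduce this from a pathwise monotone coupling. Fixing one common realization of $\{X_\textbf{n}\}$, $\{K_\textbf{n}\}$ and the tags, let $Y^{(a)}$ denote the total progeny when the initial removal time $D_\emptyset$ is set to $a$. The claim is that for $a\le a'$ every pathogen born in the $a$-realization is also born in the $a'$-realization, at the same birth time and with a weakly later removal time; in particular $Y^{(a)}\le Y^{(a')}$ pointwise. I would prove this by induction on the genealogical generation. The age at which a parent emits each child (an increment of its Poisson clock) and each separation tag are frozen across the two realizations, so a pathogen's birth time does not change as long as its parent is born at the same time and lives at least as long; and the removal time of any type $\tau$ equals $a$ plus the fixed sum of the founder clocks $K_f$ along its ancestral type-lineage, so it increases by exactly $a'-a$ when $D_\emptyset$ is raised from $a$ to $a'$. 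Hence each type persists, lives longer, and emits weakly more children (possibly founding new sub-lineages through offspring born in the extra window $(a,a']$), while no pathogen of the $a$-realization is lost. Taking $u$-th powers and expectations gives $g(a)\le g(a')$.

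Granting the monotonicity, the conclusion is immediate. For a fixed $t_0>0$,
\[
\mathbb{E}[N^u]=\int_0^\infty g(t)\,e^{-t}\,dt\ \ge\ \int_{t_0}^\infty g(t)\,e^{-t}\,dt\ \ge\ g(t_0)\int_{t_0}^\infty e^{-t}\,dt\ =\ g(t_0)\,e^{-t_0},
\]
where the middle inequality uses that $g$ is non-decreasing. Therefore $\mathbb{E}[Y(t_0)^u]=g(t_0)\le e^{t_0}\,\mathbb{E}[N^u]<\infty$, as claimed.

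I expect the main obstacle to be the monotone coupling, and in particular verifying that the ancestral-order killing rule preserves rather than destroys monotonicity. The delicate point is that raising $D_\emptyset$ postpones the removal of \emph{every} descendant type simultaneously, through the recursion $D_\tau=D_\sigma+K_f$ relating a type $\tau$, its parent type $\sigma$, and its founder $f$; one has to check that this recursion, together with the fact that the additional children born during $(a,a']$ only attach new sub-populations without perturbing the existing ones, indeed makes the $a$-population a subset of the $a'$-population. Once this pathwise domination is in place, the integral estimate above is routine.
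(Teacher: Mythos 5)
Your proof is correct and follows essentially the same route as the paper's: the Tonelli/Fubini identity $\mathbb{E}[N^u]=\int_0^\infty\mathbb{E}[Y(t)^u]e^{-t}\,dt$ combined with monotonicity of $t\mapsto Y(t)$ (the paper simply asserts that $Y(s)$ is stochastically dominated by $Y(t)$ for $0<s\le t$, which your pathwise coupling justifies in detail). Your direct bound $\mathbb{E}[Y(t_0)^u]\le e^{t_0}\,\mathbb{E}[N^u]$ is just a cleaner quantitative packaging of the paper's ``finite for almost every $t$, then extend by monotonicity'' step.
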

	As argued in \cite{B2008}, if $\mathbb{E}[N^u]<\infty$ for some $u>0$, then Fubini's theorem implies that $\mathbb{E}[N^u]=\int_0^{\infty} \mathbb{E}[Y^{u}(t)] e^{-t}dt$ and therefore $\mathbb{E}[Y^u(t)]<\infty$ for almost all $t\geq0$. This can be extended for all $t\geq0$ since for all $0<s\leq t$,   $Y(s)$ is stochastically dominated by $Y(t)$.\\
	
	Now we define $X(t)$ to be the total number of pathogens that enter in the system given that the initial pathogen $\emptyset$ cannot die before time $t$. By definition, the killing time of the initial pathogen $\emptyset$, denoted by $K_{\emptyset}$, is exponentially distributed with mean 1 and is independent of $Y$, consequently, $N\stackrel{d}{=} X(0) \stackrel{d}{=} Y(K_{\emptyset})$, where the symbol $\stackrel{d}{=}$ stands for equality in distribution. Moreover, the lack of memory of the exponential random variables implies that $X(t)\stackrel{d}{=} Y(t+K_{\emptyset})$. From these observations, we can derive the following equality in distribution
	$$ Y(t) \stackrel{d}{=} 1 + \sum_{i: \xi_{i}^{(1)}\leq t} X_{i}(t-\xi_{i}^{(1)}) +  \sum_{i: \xi_{i}^{(2)}\leq t} Y_{i}(t-\xi_{i}^{(2)})  \stackrel{d}{=} 1 + \sum_{i: \xi_{i}^{(1)}\leq t} X_{i}(\xi_{i}^{(1)}) +  \sum_{i: \xi_{i}^{(2)}\leq t} Y_{i}(t-\xi_{i}^{(2)}),$$
	where $\Phi^{(1)}=\{\xi_{i}^{(1)}\}$ and $\Phi^{(2)}=\{\xi_{i}^{(2)}\}$ are two independent Poisson point process of intensity $\lambda r$ and $\lambda (1-r)$, respectively, $\{X_i\}_{i\geq1}$ and  $\{Y_i\}_{i\geq1}$ are independent copies of $X$ and $Y$, respectively. We observe that $\Phi^{(1)}$ is related to the children tagged as \textit{separated}, and $\Phi^{(2)}$ otherwise.  Since all the variables are nonnegative, there is no issue with the case in which $Y(t)=\infty$. From this last equality, we can obtain the following recursive distributional equation (RDE) for the random function $Y$:
	\begin{align} Y(t) \stackrel{d}{=}  1 + \sum_{i: \xi_{i}^{(1)}\leq t} Y_{i}(\xi_{i}^{(1)}+K_i) +  \sum_{i: \xi_{i}^{(2)}\leq t} Y_{i}(t-\xi_{i}^{(2)}), \label{RDE}  \end{align}
	where $Y_i$ are independent copies of $Y$ and $K_{i}$ are independent exponential random variables with mean 1. The main difference between (\ref{RDE}) and the RDE obtained in \cite[Equation (3)]{B2008} for $\mathcal{B}(\lambda, 1)$ is the extra term due to $\Phi^{(2)}$.\\

	The idea of the proof in \cite{B2008,B2014} can be described as follows. Assuming that $\mathbb{E}[N]<\infty$, we find a good candidate for $\mathbb{E}[Y(t)]$ and show that necessarily $\lambda\in (0,(1+\sqrt{r})^{-2})$.  Subsequently,  we show that  $\mathbb{E}[N]<\infty$ for $\lambda\in (0,(1+\sqrt{r})^{-2}]$, and compute  $\mathbb{E}[N]=\int_0^\infty \mathbb{E}[Y(t)] e^{-t}dt$.\\
	
	Assuming that  $\mathbb{E}[N]<\infty$, we take the expectation in (\ref{RDE}) we obtain by Campbell's formula \cite{Baccelli} that,
	$$ \mathbb{E}[Y(t)] = 1 + \lambda r \int_{0}^{t} \int_{0}^{\infty} \mathbb{E}[Y(x+s)]e^{-s}ds dx +  \lambda (1-r)  \int_{0}^{t} \mathbb{E}[Y(t-x)] dx. $$
	Let $f_1(t)=\mathbb{E}[Y(t)]$, so it satisfies the following integral equation, for all $t\geq0$,
	\begin{align}\label{A}
		f_1(t) = 1 + \lambda r \int_{0}^{t} e^x \int_{x}^{\infty} f_1(s)e^{-s}ds dx +  \lambda (1-r)  \int_{0}^{t} f_1(t-x) dx. 
	\end{align}
	Taking the derivative of (\ref{A}) once and multiplying it by $e^{-t}$, and taking the derivative a second time and multiplying by $e^t$, we obtain  that $f_1$ satisfies the following linear ordinary differential equation of second order
	\begin{align}\label{B}
		x'' -[1+\lambda(1-r)]x' + \lambda x= 0, \end{align}
	with initial condition $x(0)=1$ since $\mathbb{E}[Y(0)]=1$. \\
	The discriminant of the polynomial $X^2-[1+\lambda(1-r)]X+\lambda=0$ is given by $\Delta=[1+\lambda(1-r)]^2 - 4\lambda$. It is easy to verify that $\Delta=0$ at $\lambda_1=(1+\sqrt{r})^{-2}$ and $\lambda_2=(1-\sqrt{r})^{-2}$, and strictly negative within the interval $(\lambda_1, \lambda_2)$.
	So, for $\lambda\in (\lambda_1, \lambda_2)$, the solutions of (\ref{B}) are of the form 
	$$ x_a(t)=e^{[1+\lambda(1-r)]t\slash 2}\left( \cos(t\sqrt{4\lambda -[1+\lambda(1-r)]^2}) + a\sin(t\sqrt{4\lambda -[1+\lambda(1-r)]^2}) \right), $$
	for some constant $a$. Since $f_1(t)$ must be positive for all $t>0$, this leads to a contradiction and $\mathbb{E}[N]=\infty$. Since $N$ is non-decreasing in $\lambda$, the same conclusion holds for all $\lambda>\lambda_1$ as well. We have just proved Proposition \ref{moments} $(ii)$.\\
	
	Next we only consider $0<\lambda\leq \lambda_1$, in which case, $\Delta\geq 0$ and the roots of the polynomial $X^2-[1+\lambda(1-r)]X+\lambda=0$ are real numbers, $\alpha$ and $\beta$, given by
	$$  \alpha= \frac{1+\lambda(1-r)- \sqrt{\Delta}}{2} \hbox{ and } \beta= \frac{1+\lambda(1-r)+ \sqrt{\Delta}}{2}. $$
	
	When $0<\lambda<\lambda_1$, the solutions of (\ref{B}) are of the form 
	$$ x_a(t)= (1-a)e^{\alpha t } + a e^{\beta t}, $$
	for some constant $a$.\\
	While, if $\lambda=\lambda_1$, the solutions of (\ref{B}) are of the form
	$$x_{a}(t)= (a t + 1) e^{\alpha t}=(a t + 1) e^{t\slash (1+\sqrt{r})}, $$
	for some constant $a$. \\
	It is easy to see that, the functions $x_a(t)$ with $a\geq0$ are the non-negative solutions of (\ref{A}), and  therefore, possible candidates for $\mathbb{E}[Y(t)]$. \\
	
	In order to finish the proof of Proposition \ref{moments}, 
	it remains to show that, for $0<\lambda \leq \lambda_1$, $\mathbb{E} [N]<\infty$ and that $f_1(t)=e^{\alpha t}$ i.e. $x_a(t)$ with $a=0$. Consequently, 
	$$\mathbb{E}[N]=\int_0^{\infty} f_1(t)e^{-t} dt=\frac{1}{1-\alpha}= \frac{2}{1-\lambda(1-r)+\sqrt{[1+\lambda(1-r)]^2 - 4\lambda}},$$ which coincides with the expression in Proposition \ref{moments}. \\
	
	Next we proceed to show that, if $0<\lambda \leq \lambda_1$, then  $\mathbb{E} [N]<\infty$ and that $f_1(t)=x_0(t)=e^{\alpha t}$.\\
	
	In order to proceed we need a lemma analogous to \cite[Lemma 5.2]{B2014}. First, for $0<\lambda\leq \lambda_1$, we define
	$$ \bar{\gamma}(\lambda, r) = \frac{1+\lambda(1-r) + \Delta}{1+\lambda(1-r) - \Delta}= \frac{\beta}{\alpha}, $$
	as before $\alpha$ and $\beta$ are the roots of the polynomial $X^2-[1+\lambda(1-r)]X +\lambda=0$.\\
	The next lemma collects two properties of $ \bar{\gamma}(\lambda, r)$ that are key in what follows.
	
	\begin{lem}
    For fixed $r\in (0,1]$ and  $0<\lambda<\lambda_1$. \\
		\label{key}
		If $1<u<\bar{\gamma}(\lambda, r)$, then it holds that,
		\begin{itemize}
			\item [$(i)$] $u\alpha<1$,
			\item [$(ii)$] $u\alpha[1+\lambda(1-r) - u\alpha]>\lambda$
		\end{itemize}
	\end{lem}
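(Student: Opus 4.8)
The plan is to reduce everything to Vieta's relations for the roots $\alpha$ and $\beta$ of the polynomial $X^2-[1+\lambda(1-r)]X+\lambda$, namely $\alpha+\beta=1+\lambda(1-r)$ and $\alpha\beta=\lambda$, together with the fact that for $0<\lambda<\lambda_1$ one has $0<\alpha<\beta$ (since $\Delta>0$ on this range). Writing $\bar{\gamma}(\lambda,r)=\beta/\alpha$, the hypothesis $1<u<\bar{\gamma}(\lambda,r)$ is equivalent to $\alpha<u\alpha<\beta$, so it is natural to set $v:=u\alpha$ and work with a single variable $v$ ranging in the open interval $(\alpha,\beta)$. Both assertions then become elementary statements about $v$, $\alpha$, and $\beta$.

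For part $(i)$, since $u<\beta/\alpha$ gives $v=u\alpha<\beta$, it suffices to prove $\beta<1$. I would establish this via the equivalent inequality $\sqrt{\Delta}<1-\lambda(1-r)$, noting first that the right-hand side is positive because $\lambda(1-r)<\lambda<\lambda_1<1$ on the relevant range. Squaring, this amounts to $\Delta<(1-\lambda(1-r))^2$, and a direct expansion shows that the difference $\Delta-(1-\lambda(1-r))^2$ collapses to $-4\lambda r$, which is strictly negative for $\lambda,r>0$. Hence $\beta<1$, and therefore $u\alpha<\beta<1$.

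For part $(ii)$, with $v=u\alpha$ I would compute the target quantity using $\alpha+\beta=1+\lambda(1-r)$ and $\alpha\beta=\lambda$:
\[
v\,[1+\lambda(1-r)-v]-\lambda = v(\alpha+\beta)-v^2-\alpha\beta = -(v-\alpha)(v-\beta) = (v-\alpha)(\beta-v).
\]
Since $1<u<\bar{\gamma}(\lambda,r)$ places $v$ strictly between $\alpha$ and $\beta$, both factors $(v-\alpha)$ and $(\beta-v)$ are positive, so the right-hand side is positive and the claimed inequality $u\alpha[1+\lambda(1-r)-u\alpha]>\lambda$ follows immediately.

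All the computations are automatic once the roots are handled through their sum and product, so the only genuinely non-routine step is the bound $\beta<1$ in part $(i)$: it is what guarantees that $u\alpha$ stays below $1$ throughout the admissible range of $u$, and its proof hinges on the clean identity $\Delta-(1-\lambda(1-r))^2=-4\lambda r$. I would therefore single out this identity as the crux of the argument; the factorization $(v-\alpha)(\beta-v)$ in part $(ii)$ is then the second (and only other) insight needed, after which both conclusions are immediate.
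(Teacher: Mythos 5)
Your proof is correct, and it is worth comparing with the paper's. For part $(ii)$ you have essentially the paper's argument in different packaging: the paper writes $u=1+\epsilon$ with $\epsilon\in\bigl(0,\sqrt{\Delta}/\alpha\bigr)$ and uses $\alpha[1+\lambda(1-r)-\alpha]=\lambda$ to identify the excess over $\lambda$ as $\epsilon\alpha[\sqrt{\Delta}-\epsilon\alpha]$, which is exactly your $(v-\alpha)(\beta-v)$, since $\sqrt{\Delta}=\beta-\alpha$ and $\epsilon\alpha=v-\alpha$; your Vieta factorization $v[1+\lambda(1-r)-v]-\lambda=(v-\alpha)(\beta-v)$ simply makes the positivity transparent in one line. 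Part $(i)$, however, is genuinely different. The paper proves $\beta<1$ by a monotonicity argument: it notes $\beta=1$ at $\lambda=0$ and shows $d\beta/d\lambda<0$ on $(0,\lambda_1)$ through an inequality chain that itself rests on the algebraic fact $\Delta<(1-\lambda)^2$ (equivalent to $r>0$). You instead prove $\beta<1$ directly from the identity $\Delta-(1-\lambda(1-r))^2=-4\lambda r<0$, which gives $\sqrt{\Delta}<1-\lambda(1-r)$ (the right side being positive since $\lambda<\lambda_1<1$) and hence $\beta<1$ with no calculus at all. Your route is shorter, more elementary, and avoids the somewhat delicate derivative estimate; the paper's route yields as a by-product the (unused here, but potentially interesting) fact that $\beta$ is strictly decreasing in $\lambda$ on $(0,\lambda_1)$. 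Both arguments rely on the standing facts $0<\alpha<\beta$ and $\alpha>0$, so that $1<u<\beta/\alpha$ is equivalent to $\alpha<u\alpha<\beta$, which you state and which follow immediately from Vieta's relations.
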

	\begin{proof}[Proof of Lemma \ref{key}]
		We start with the claim in  $(i)$. By definition, $u\alpha<\beta$. We observe that, for $r\in (0,1]$ and $\lambda=0$, $\beta=1$. As shown below, the derivative of $\beta$ with respect to $\lambda$ is strictly negative for $\lambda\in (0,\lambda_1)$. So, $\beta<1$ for $\lambda\in (0,\lambda_1)$.
		\begin{align*} \frac{d \beta}{d \lambda} 
			&=\frac{1}{\sqrt{\Delta}}\left( \frac{(1-r)[1+\lambda(1-r)+ \sqrt{\Delta}] -2 }{2}
			\right)\\
			&<\frac{1}{\sqrt{\Delta}} \left( \frac{[1+\lambda(1-r)+ \sqrt{\Delta}] -2 }{2}
			\right)< -\frac{\lambda r}{2\sqrt{\Delta}}.
		\end{align*}
		The last inequality follows from  $\Delta=[1+\lambda(1-r)]^2 - 4\lambda <  (1-\lambda)^2$.\\
		Next we show that the claim in $(ii)$ holds. We observe that,  $1<u<\bar{\gamma}(\lambda, r)$ is equivalent to $u=1+\epsilon$, $\epsilon\in \left(0, \frac{\sqrt{\Delta}}{\alpha}\right)$, and that $\alpha[1+\lambda(1-r)-\alpha]=\lambda$. So,
		\begin{align*}
			(1+\epsilon)\alpha[1+\lambda(1-r)-(1+\epsilon)\alpha] &= \lambda + \epsilon \alpha[1+\lambda(1-r) -2\alpha -\epsilon\alpha]\\
			&= \lambda + \epsilon \alpha[\sqrt{\Delta}-\epsilon \alpha]>\lambda.
		\end{align*}
		The last step follows from the fact that, $\sqrt{\Delta}-\epsilon \alpha> 0$, for $\epsilon\in \left(0, \frac{\sqrt{\Delta}}{\alpha}\right)$.
	\end{proof}
	
	For $1<u<\bar{\gamma}(\lambda, r)$, we define $\mathcal{H}_u$ as the set of measurable functions $h: [0,\infty) \rightarrow [0,\infty)$ such that $h$ is non-decreasing and $\sup_{t\geq0} h(t)e^{-u \alpha t}<\infty$. Let $L>0$, we define the following mapping from $\mathcal{H}_u$ to $\mathcal{H}_u$,
	$$ \Psi_u: h \rightarrow L e^{u\alpha t} + \lambda r \int_{0}^{t} e^x \int_{x}^{\infty} h(s) e^{-s} ds dx + \lambda(1-r) \int_{0}^{t} h(s) ds.$$
	
	We note that for $r=1$, $\Psi_u$ coincides with the mapping introduced in \cite{B2008} to study the total progeny of $\mathcal{B}(\lambda, 1)$. As in \cite{B2008}, the fact that $\Psi_u$ is a mapping from $\mathcal{H}_u$ to $\mathcal{H}_u$ can be easily verified using Lemma \ref{key} $(i)$.
	\begin{align} \lambda r \int_0^t e^x \int_0^\infty e^{u\alpha  s} e^{-s} ds dx + \lambda (1- r) \int_0^t e^{u\alpha  s} ds 
		&\leq \frac{\lambda[1-(1-r)u \alpha]}{u\alpha(1-u \alpha)}e^{u\alpha t}. \end{align}
	
	For $\lambda=\frac{1}{(1+\sqrt{r})^2}$, we note that $\alpha=1/(1+\sqrt{r})$. Let $\mathcal{H}_1$ be the set of measurable function  $h: [0,\infty) \rightarrow [1,\infty)$ such that $h$ is non-decreasing and $\sup_{t\geq0} h(t)e^{-t/(1+\sqrt{r})}<\infty$. We define the following mapping from $\mathcal{H}_1$ to $\mathcal{H}_1$,
	$$ \Psi_1: h \rightarrow 1 + \frac{r}{(1+\sqrt{r})^2} \int_{0}^{t} e^x \int_{x}^{\infty} h(s) e^{-s} ds dx +  \frac{1-r}{(1+\sqrt{r})^2} \int_{0}^{t} h(s) ds.$$
	
	We denote by $f\preccurlyeq g$, the partial order of point-wise domination on  $ \mathcal{H}_u$ (or $\mathcal{H}_1$),  i.e.  $f(t)\leq g(t)$ for all $t\geq0$.
	
	\begin{lem} For $r\in (0,1]$ and $\lambda\in (0,\lambda_1]$.
		\label{lembor}
		\begin{itemize}
			\item [$(i)$] Let $1<u<\bar{\gamma}$. If $\lambda<\frac{1}{(1+\sqrt{r})^2}$ and $f\in \mathcal{H}_u$ such that $f\preccurlyeq\Psi_u(f)$. Then for all $t\geq0$,
			$$f(t) \leq  \frac{u\alpha(1-u\alpha)}{u \alpha[1+ \lambda(1-r)-u\alpha] - \lambda} e^{u\alpha t}.  $$ 
			\item [$(ii)$] If $\lambda=\frac{1}{(1+\sqrt{r})^2}$ and $f\in \mathcal{H}_1$ such that $f \preccurlyeq \Psi_1(f)$, then for all $t\geq0$,
			$$f(t) \leq C e^{t\slash (1+\sqrt{r})}, \hbox{ for some $C\geq1$}.$$
		\end{itemize}
	\end{lem}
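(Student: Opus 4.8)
The plan is to treat both items by the same \emph{a priori} exponential bound, exploiting that $\Psi_u$ and $\Psi_1$ are monotone for the pointwise order $\preccurlyeq$ (their kernels are nonnegative, so $g\preccurlyeq h$ implies $\Psi_u(g)\preccurlyeq\Psi_u(h)$) and that the pure exponential $e^{u\alpha t}$ is, up to a scalar factor, a fixed shape for the integral part of the operator. Since $f\in\mathcal H_u$, the quantity $C:=\sup_{t\ge0}f(t)e^{-u\alpha t}$ is finite, and the whole argument consists in turning the functional inequality $f\preccurlyeq\Psi_u(f)$ into a single scalar inequality for $C$ that can be solved explicitly.

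For part $(i)$ I would proceed as follows. By Lemma~\ref{key}$(i)$ we have $u\alpha<1$, so the inner integral $\int_x^\infty e^{(u\alpha-1)s}\,ds$ converges and $e^{u\alpha\cdot}$ is integrable against $e^{-s}$. From $f\preccurlyeq Ce^{u\alpha\cdot}$ (the definition of $C$) and monotonicity, the hypothesis gives $f\preccurlyeq\Psi_u(f)\preccurlyeq\Psi_u\!\bigl(Ce^{u\alpha\cdot}\bigr)$. Evaluating $\Psi_u$ on this test function (with first term $L e^{u\alpha t}=e^{u\alpha t}$, i.e. $L=1$) and invoking the estimate displayed just before the lemma, one gets $\Psi_u(Ce^{u\alpha\cdot})(t)\le e^{u\alpha t}+C\rho\,e^{u\alpha t}$, where $\rho=\dfrac{\lambda[1-(1-r)u\alpha]}{u\alpha(1-u\alpha)}$. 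Dividing by $e^{u\alpha t}$ and taking $\sup_{t\ge0}$ yields the scalar inequality $C\le 1+\rho C$. The precise content of Lemma~\ref{key}$(ii)$, namely $u\alpha[1+\lambda(1-r)-u\alpha]>\lambda$, is exactly the assertion $\rho<1$; hence $C\le(1-\rho)^{-1}$, and simplifying $(1-\rho)^{-1}$ produces the stated constant $\dfrac{u\alpha(1-u\alpha)}{u\alpha[1+\lambda(1-r)-u\alpha]-\lambda}$. Since $f(t)\le Ce^{u\alpha t}$ by definition of $C$, this proves $(i)$.

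For part $(ii)$ the same scheme is applied at the borderline $\lambda=\lambda_1=(1+\sqrt r)^{-2}$, where $\alpha=(1+\sqrt r)^{-1}$ and one is forced to take $u=1$. Here $\bar\gamma=\beta/\alpha=1$, so no admissible $u>1$ exists and the multiplicative factor from $(i)$ degenerates to $\rho=1$. This is the main obstacle: the contraction is lost and $C\le 1+\rho C$ becomes vacuous, so one cannot produce an explicit constant as before. The way around it is to use the \emph{exact} operator output instead of the crude bound, together with the fact that $\Psi_1$ has additive constant term $1$ rather than $e^{\alpha t}$. Since $f\in\mathcal H_1$ forces $C:=\sup_{t\ge0}f(t)e^{-\alpha t}<\infty$, and since $f$ takes values in $[1,\infty)$ one has $C\ge f(0)\ge1$. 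A direct computation of $\Psi_1$ on $Ce^{\alpha\cdot}$ gives the \emph{equality} $\Psi_1(Ce^{\alpha\cdot})(t)=1+C(e^{\alpha t}-1)=(1-C)+Ce^{\alpha t}$, because at criticality the bracketed coefficient equals $\sqrt r+(1-\sqrt r)=1$ exactly. Combining with $f\preccurlyeq\Psi_1(f)\preccurlyeq\Psi_1(Ce^{\alpha\cdot})$ gives $f(t)\le(1-C)+Ce^{\alpha t}\le Ce^{\alpha t}$, the last step using $1-C\le0$. This is precisely the claimed bound with $C\ge1$.

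I expect the delicate point to be exactly this critical case: away from criticality the factor $\rho<1$ does all the work and the estimate is routine, but at $\lambda=\lambda_1$ the borderline value $\rho=1$ kills the contraction, and the argument must instead rely on keeping the negative additive remainder $1-C\le0$ to recover the clean exponential bound without the spurious polynomial factor. It is precisely this absence of a polynomial correction that will later allow one to discard the $e^{\beta t}$ (resp. $t\,e^{\alpha t}$) component of the candidate solutions $x_a$ and conclude $f_1(t)=e^{\alpha t}$ in the proof of Proposition~\ref{moments}.
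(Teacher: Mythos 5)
Your proof is correct and takes essentially the same route as the paper's: monotonicity of $\Psi_u$ and $\Psi_1$, the explicit action of the operator on exponential test functions, and Lemma~\ref{key} supplying both integrability ($u\alpha<1$) and the contraction factor $\rho=\frac{\lambda[1-(1-r)u\alpha]}{u\alpha(1-u\alpha)}<1$. The only difference is cosmetic: where the paper iterates $h_k=\Psi_u(h_{k-1})$, producing constants $C_k=L+\rho C_{k-1}$ that converge to $L/(1-\rho)$, you exploit finiteness of $C=\sup_{t\ge0}f(t)e^{-u\alpha t}$ to solve the scalar inequality $C\le 1+\rho C$ in a single step; your treatment of the critical case $(ii)$, including the exact identity $\Psi_1(Ce^{\alpha\cdot})=(1-C)+Ce^{\alpha t}$ and the absorption of $1-C\le 0$, coincides with the paper's argument.
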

	For the proof of this lemma we follow more closely the proof of  \cite[Lemma 5.2]{B2014} instead of \cite[Lemma 2]{B2008}.\\
	\begin{proof}
		We start with $(i)$. Let $h_0=f\in \mathcal{H}_u$ and for $k\geq 1$, we define $h_k=\Psi_u(h_{k-1})$.
		Since $f\in \mathcal{H}_u$, there exists $C_0>0$ such that $f\preccurlyeq g$, where $g(t)=C_0 e^{u\alpha t}$, for all $t\geq 0$. Applying $\Psi_u$ to $h_0$ gives that
		\begin{align*} h_1(t) &= 
			L e^{u\alpha t} + \frac{\lambda[1 - (1-r)u\alpha ]C_0e^{u\alpha t}}{u\alpha (1-u\alpha)} - \frac{\lambda[1 - (1-r)u\alpha ]C_0}{u\alpha (1-u \alpha)}
			\\
			& \leq L e^{u\alpha t} + \frac{\lambda[1-(1-r)u\alpha]}{u\alpha(1-u \alpha)}C_0e^{u\alpha t} = C_1 e^{u\alpha t}, 
		\end{align*}
		where $C_1 =L+ C_0 \frac{\lambda[1-(1-r)u\alpha]}{u\alpha (1-u\alpha)}$. By Lemma \ref{key}, it follows that, $0<\frac{\lambda[1-(1-r)u\alpha]}{u\alpha (1-u\alpha)}<1$.  \\
		By recurrence, $\lim \sup_k h_{k}(t)\leq C_{\infty} e^{u\alpha t}$ for all $t\geq 0$, where
		$C_{\infty}=\frac{L u\alpha (1-u\alpha)}{u\alpha[1+\lambda(1-r)-u\alpha] -\lambda}$. Again, by Lemma \ref{key}, it follows that  $C_{\infty}$ is positive and finite.\\
		We notice that the mapping $\Psi_u$ is monotone on $\mathcal{H}_u$ i.e. if $f_1,f_2\in \mathcal{H}_u$ are such that $f_1 \preccurlyeq f_2$, then $\Psi_u(f_1) \preccurlyeq \Psi_u(f_2)$. Since $\Psi_u(f) \preccurlyeq \Psi_u(g):=h_1(t)$, the assumption that $f\preccurlyeq \Psi_u(f)$ implies that $f(t)\leq C_{\infty} e^{u\alpha t}$ for all $t\geq 0$.\\ 
		Part $(ii)$ is simpler. For $r\in (0,r]$, we note that at $\lambda=\frac{1}{(1+\sqrt{r})^2}$, we have $\alpha=\frac{1}{(1+\sqrt{r})}$.  Since $f\in \mathcal{H}_1$,  there exists $C_0\geq 1$ such that $f\preccurlyeq g$, where $g(t)=C_0 e^{t/(1+\sqrt{r})}$, $t\geq0$. Applying $\Psi_1$ to $g$, we obtain that
		$$ \Psi_1(g)=(1-C_0) + C_0 e^{t/(1+\sqrt{r})}\leq C_0e^{t/(1+\sqrt{r})}.$$
		The mapping $\Psi_1$ is monotone on $\mathcal{H}_1$. Hence, $\Psi_1(f)\preccurlyeq \Psi_1(g)\preccurlyeq g$, and the assumption that $f\preccurlyeq \Psi_1(f)$ implies that  $f \preccurlyeq g$. 
	\end{proof}
	Now we can proceed with the proof of Proposition \ref{moments} $(i)$.
	\begin{proof}[Proof of Proposition \ref{moments} $(i)$]
		We define $f_1^{(n)}(t)=\mathbb{E}[\min\{ Y(t), n \}]$, $n\geq1$, from (\ref{RDE}) we obtain that
		\begin{align} \min \{ Y(t), n\} \stackrel{st}{\leq}  1 + \sum_{i: \xi_{i}^{(1)}\leq t} \min \{ Y_{i}(\xi_{i}^{(1)}+K_i), n\} +  \sum_{i: \xi_{i}^{(2)}\leq t} \min \{ Y_{i}(t-\xi_{i}^{(2)}),n \}, \label{RDE2}  \end{align}
		where `` $\stackrel{st}{\leq}$'' stands for stochastic dominance. Taking the expectation, we have that, for all $t\geq0$,
		\begin{align}\label{ff}
			f_1^{(n)}(t) \leq 1 + \lambda r \int_{0}^{t} e^x \int_{x}^{\infty}  f_1^{(n)}(s)e^{-s}ds dx +  \lambda (1-r)  \int_{0}^{t}  f_1^{(n)}(t-x) dx. 
		\end{align}
		
		Consequently, $f_1^{(n)}\preccurlyeq \Psi_u(f_1^{(n)})$. For $\lambda\in (0,\lambda_1)$, Lemma $\ref{lembor}$ $(i)$ implies that, for all $t\geq0$,
		$$ f_1^{(n)}(t)\leq \frac{u\alpha(1-u\alpha)}{u \alpha[1+ \lambda(1-r)-u\alpha] - \lambda} e^{u\alpha t}.  $$
		The Monotone Convergence Theorem implies that, for all $t\geq0$, $\lim_{n\rightarrow \infty} f_1^{(n)}(t)=f_1(t)$ exists and is bounded by $\frac{u\alpha(1-u\alpha)}{u \alpha[1+ \lambda(1-r)-u\alpha] - \lambda} e^{u\alpha t}$. Therefore, $f_1$ satisfies the differential equation $(\ref{B})$ and is equal to $x_a(t)=(a-1)e^{\alpha t}+ ae^{\beta t}$ for some constant $a\geq0$. Since $f_1(t)\leq \frac{u\alpha(1-u\alpha)}{u \alpha[1+ \lambda(1-r)-u\alpha] - \lambda} e^{u\alpha t} $, and $u\alpha<\beta$, it follows that $f_1(t)=x_0(t)=e^{\alpha t}$. Therefore, 
		$$\mathbb{E} [N]= \int^\infty_0 f_1(t) e^{-t} dt = \frac{1}{(1-\alpha)} = \frac{2}{1-\lambda(1-r) + \sqrt{\Delta}}. $$
		Analogously, at $\lambda=(1+\sqrt{r})^{-2}$, we obtain that $f_1(t)\leq C e^{t/(1+\sqrt{r})}$, for some $C\geq1$. Since $f_1(t)=x_a(t)=(at+1)e^{t/(1+\sqrt{r})}$, for some constant $a\geq0$, the upper bound on $f_1(t)$ implies that $f_1(t)=e^{t/{(1+\sqrt{r})}}$. Therefore, 
		$$\mathbb{E} [N]= \int^\infty_0 f_1(t) e^{-t} dt = \frac{1+\sqrt{r}}{\sqrt{r}}. $$ \end{proof}

\subsection{Model adding deleterious mutations}

\begin{proof}[Proof of Theorem \ref{TNSrp}]
Consider the process $\mathcal{B}(\lambda,r,p)$ introduced in Section~3 and let $\widehat{\mathcal{B}}(\lambda,r,p)$ be the subprocess obtained by discarding all deleterious mutants (sterile individuals). Since sterile pathogens produce no offspring, discarding them does not change the event of survival, hence $\mathcal{B}(\lambda,r,p)$ survives if and only if $\widehat{\mathcal{B}}(\lambda,r,p)$ survives.

Now observe that each pathogen in $\widehat{\mathcal{B}}(\lambda,r,p)$ gives birth at rate $\lambda$, but only births that are either copies (probability $1-r$) or beneficial mutants (probability $rp$) are retained in $\widehat{\mathcal{B}}(\lambda,r,p)$. By thinning of the Poisson process, the retained births form a Poisson process of rate
\[
\tilde{\lambda}=\lambda(1-r+rp),
\]
and, conditional on a retained birth, it is a beneficial mutation with probability
\[
\tilde{r}=\frac{rp}{1-r+rp}.
\]
Therefore the subprocess $\widehat{\mathcal{B}}(\lambda,r,p)$ has the same law as the original beneficial-mutation model $\mathcal{B}(\tilde{\lambda},\tilde{r})$ introduced in Section~2. Applying Theorem~\ref{TNS} we obtain that $\widehat{\mathcal{B}}(\lambda,r,p)$ dies out almost surely if and only if
\[
\tilde{\lambda}\le (1+\sqrt{\tilde{r}})^{-2}.
\]
A direct simplification yields
\[
\tilde{\lambda}(1+\sqrt{\tilde{r}})^2
= \lambda(1-r+rp)\left(1+\sqrt{\frac{rp}{1-r+rp}}\right)^2
= \lambda\left(\sqrt{1-r+rp}+\sqrt{rp}\right)^2.
\]
Hence the last inequality is equivalent to \eqref{eq:theta-closed}.

\end{proof}

\begin{proof}[Proof of Proposition \ref{prop:moments-B-lrp}]
We proceed as in the proof of Theorem~3.2, using the same thinning/coupling.
Set
\[
q:=1-r(1-p)=1-r+rp,\qquad \tilde\lambda:=\lambda q,\qquad \tilde r:=\frac{rp}{q}.
\]
In the coupling used in the proof of Theorem~3.2, the viable sub-process evolves exactly as the beneficial-mutation model $\mathcal B(\tilde\lambda,\tilde r)$.
Consequently, \emph{replacing} $(\lambda,r)$ by $(\tilde\lambda,\tilde r)$ in Proposition~2.3 yields $\mathbb{E}[N_{\mathrm v}]$
(in the finite-mean regime):
\begin{equation}\label{eq:ENv}
\mathbb{E}[N_{\mathrm v}]
=
\frac{2}{\,1-\tilde\lambda(1-\tilde r)+\sqrt{\bigl(1+\tilde\lambda(1-\tilde r)\bigr)^2-4\tilde\lambda}\,}.
\end{equation}
Using $\tilde\lambda(1-\tilde r)=\lambda(1-r)$ and $\tilde\lambda=\lambda(1-r+rp)$, we simplify \eqref{eq:ENv} to
\[
\mathbb{E}[N_{\mathrm v}]
=
\frac{2}{\,1-\lambda(1-r)+\sqrt{(1+\lambda(1-r))^2-4\lambda(1-r+rp)}\,}.
\]
Note that $N_{\mathrm v}$ does not count sterile mutants, so it remains to account for them.

Conditional on the entire viable sub-process,
births produced by viable individuals form a Poisson process of rate $\lambda$ and each such birth is declared
\emph{viable} with probability $q$ and \emph{sterile} with probability $1-q$, independently. Hence, conditional on the viable sub-process, the viable births and the
sterile births are obtained by a binomial thinning of the same underlying birth process, and therefore their
conditional expectations satisfy
\[
\mathbb{E}\!\left[S\,\middle|\,\mathcal F_{\mathrm v}\right]
=\frac{1-q}{q}\,\mathbb{E}\!\left[N_{\mathrm v}-1\,\middle|\,\mathcal F_{\mathrm v}\right],
\]
where $\mathcal F_{\mathrm v}$ is the $\sigma$-field generated by the viable sub-process (and $N_{\mathrm v}-1$ is the
number of viable births). Taking expectations gives
\[
\mathbb{E}[S]=\frac{1-q}{q}\,(\mathbb{E}[N_{\mathrm v}]-1).
\]
Therefore, \(\mathbb{E}[N^{(p)}]=\mathbb{E}[N_{\mathrm v}]+\mathbb{E}[S]\), and the expressions in Proposition \ref{prop:moments-B-lrp} follow from simple algebraic manipulations. Finally, if $\mathbb{E}[N_{\mathrm v}]=\infty$, then $\mathbb{E}[S]=\mathbb{E}[N^{(p)}]=\infty$. 
\end{proof}


\backmatter

\bmhead{Acknowledgements}
Research supported by ANID-FONDECYT Iniciaci\'on grant (11230220), FAPESP (2023/13453-5) and Universidad de Antioquia (Project No. 2025-80410).

\end{document}